%
%
\documentclass[12pt]{article}
\usepackage{amsmath,amsfonts,amsthm,amssymb,amscd,colordvi}
\usepackage{authblk}

\usepackage{color}

\binoppenalty=9999 
\relpenalty=9999

\newcommand{\diver}{\mathop{\rm div}\nolimits}

\newcommand{\p}{\partial}

\newcommand{\e}{\varepsilon}
\newcommand{\vk}{\varkappa}
\newcommand{\vkj}{\varkappa^{(j)}}

\newcommand{\cI}{\mathcal I}

\newcommand{\R}{{\mathbb R}}

\newcommand{\Z}{{\mathbb Z}}

\newcommand{\cE}{{\cal E}}
\newcommand{\T}{{\mathbb T}}
\newcommand{\N}{{\mathbb N}}

\newcommand{\es}{{\mathbb S}}

\newcommand{\Om}{{\cal O}}

\newcommand{\cF}{{\cal F}}

\newcommand{\cH}{{\cal H}}

\newcommand{\cS}{{\cal S}}

\newcommand{\dist}{\mathop{\rm dist}\nolimits}
\newcommand{\vf}{vectorfield   }
\newcommand{\rot}{\mathop{\rm rot}\nolimits}

\newcommand{\Vect}{\text{\upshape Vect}}

\newcommand{\SVect}{\text{\upshape SVect}}

\def\12{\tfrac12}

\theoremstyle{plain}
\newtheorem{theorem}{Theorem}[section]
\newtheorem{lemma}[theorem]{Lemma}
\newtheorem{proposition}[theorem]{Proposition}
\newtheorem{corollary}[theorem]{Corollary}
\theoremstyle{definition}
\newtheorem{definition}[theorem]{Definition}
\theoremstyle{remark}
\newtheorem{remark}[theorem]{Remark}
\theoremstyle{example}
\newtheorem{example}[theorem]{Example}
\newtheorem{convention}[theorem]{Convention}
\theoremstyle{definition}

\numberwithin{equation}{section}

\textwidth 14.5cm


\newcounter{bk}





\begin{document}

\title{KAM theory and the 3D Euler equation}

\author[1]{Boris Khesin\thanks{khesin@math.toronto.edu}}
\author[2]{Sergei Kuksin\thanks{kuksin@gmail.com}}
\author[3]{Daniel Peralta-Salas\thanks{dperalta@icmat.es}}
\affil[1]{Department of Mathematics, University of Toronto, Toronto, ON M5S 2E4, Canada}
\affil[2]{Universit\'e Paris-Diderot (Paris 7), UFR de Math\'ematiques - Batiment Sophie Germain, 5 rue Thomas Mann, 75205 Paris CEDEX 13, France}
\affil[3]{Instituto de Ciencias Matem\'aticas, Consejo Superior de Investigaciones Cient\'ificas, 28049 Madrid, Spain}

\maketitle

\date{}
\maketitle

\begin{abstract}
We prove that the dynamical system defined by the hydrodynamical Euler equation on any closed Riemannian $3$-manifold $M$ is not mixing in the $C^k$ topology ($k>4$ and non-integer) for any prescribed 
 value of helicity and sufficiently large values of energy. This can be regarded as a 3D version of Nadirashvili's and Shnirelman's theorems showing the existence of wandering solutions for the 2D Euler equation. Moreover, we obtain an obstruction for the mixing under the Euler flow of $C^k$-neighborhoods of divergence-free vectorfields on $M$. On the way we construct a family of functionals on the space of divergence-free 
$C^1$ vectorfields on the manifold, which are integrals of motion of the 3D Euler equation. 
Given a vectorfield these functionals measure the part of the manifold foliated 
by ergodic invariant tori of  fixed isotopy types. We use the KAM theory to establish some 
 continuity properties of these functionals in the $C^k$-topology. This allows one to get a lower bound for the $C^k$-distance between a divergence-free vectorfield (in particular, a steady solution) and a trajectory of the Euler flow. 
\end {abstract}
\bigskip

\section{Introduction}
\label{s0}
One of the achievements of the KAM theory was establishing that a typical Hamiltonian 
system close to a completely  integrable one has many invariant tori and hence cannot be ergodic. 
On the other hand, the celebrated Arnold's theorem on the structure of typical 3D steady flows 
of an ideal fluid proves that such flows are almost everywhere fibered by invariant tori. In this paper we 
show how this similarity of steady flows and integrable systems implies non-ergodicity of 
the infinite-dimensional dynamical system defined by the hydrodynamical Euler equation.

Namely, the motion of an ideal fluid on a Riemannian closed manifold $M$ is described by its velocity field $u(\cdot,t)$, which satisfies the Euler equation
\begin{equation} \label{4.1}
\partial_tu+ \nabla_u u=-\nabla p\,, \;\; \diver u=0\,,
\end{equation}
for a pressure function $p(\cdot,t)$ defined by these equations up to a constant. Here $\nabla_u u$ is the covariant derivative of $u$ along itself. This equation implies that the \emph{vorticity field} $\omega:=\rot u$ is transported by the flow, i.e.
\begin{equation}\label{eqtrans}
\partial_t\omega+[u,\omega]=0\,,
\end{equation}
and hence in any flow the vortex lines at $t=0$ are diffeomorphic to the corresponding 
vortex lines at any other $t$ (for which the solution exists). 
This phenomenon is known as Kelvin's circulation theorem.

A solution $u$ to the Euler equation is called \emph{steady (or stationary)} when it does not depend on time, so it satisfies the equation 
$$
\nabla_u u=-\nabla p\,, \;\; \diver u=0\,.
$$
In particular, Eq.~\eqref{eqtrans} implies that the vorticity and the velocity fields commute for stationary solutions, that is $[\omega,u]=0$. The topology of ``typical" steady solutions of 3D Euler flows was described by Arnold in his structure theorem~\cite{Arn66}. Namely, under the assumption of sufficient smoothness and non-collinearity of $u$ and $\rot u$, the manifold $M$, away from a singular set, is fibred by 2-tori invariant for both fields $u$ and $\rot u$. The motion on these tori is periodic or quasi-periodic. Accordingly, a steady flow in 3D
looks like an integrable Hamiltonian system with two degrees of freedom on a fixed energy level. 

Inspired by the mechanism of vorticity transport and the existence of many invariant tori of the vorticity in steady fluid flows, we introduce an integral of motion
for the Euler equation that is independent of the energy and the helicity (the classical first integrals of 3D Euler). This conserved quantity is a functional $\vk$ on the space of divergence-free vectorfields which measures the fraction of $M$ which is covered by ergodic invariant tori of $\rot u$. Moreover,
the way how the invariant tori are embedded (knotted) in $M$ is also an invariant
and it gives a family  $\vk_a$, $a\in\Z$, of infinitely many conserved quantities. 
(More precisely, the index $a$ belongs to the countable set of different embedding classes.) 
The whole family of quantities $\{\vk_a\}$ for a given divergence-free vectorfield will be called the integrability spectrum of this field on the manifold. This setting provides a framework to apply the KAM theory, which in fact allows us to prove basic continuity properties of $\vk_a$ evaluated at certain nondegenerate vectorfields.  

Roughly speaking, the key idea is as follows. The fraction of $M$ filled in with invariant tori of the vorticity does not change during the evolution as a consequence of the vorticity transport. If the vorticity of a non-stationary solution approaches an integrable (i.e. a.e. fibered by tori) divergence-free field $v$, then the KAM theory of divergence-free vectorfields guarantees that the time-dependent vorticity must possess many invariant tori of the same topology as those for $v$, under certain nondegeneracy conditions. This allows us to estimate how close a non-stationary solution of the Euler equation can get to a given integrable vectorfield, and in particular to a steady solution.

Using the conserved quantities $\vk_a$, the KAM theory, and simple differential topology, we prove the main results of this article: First, \emph{the Euler flow~\eqref{4.1} is not ergodic in the $C^k$-topology, $k>4$ and non-integer\footnote{We assume that $k$ is a non-integer so that the Euler equation defines a local flow in the H$\ddot{\text{o}}$lder space $C^k$, cf.~\cite{EM70}. This property fails for integer $k$, see~\cite{Bo}.
}, on the space of divergence-free vectorfields of fixed  helicity and sufficiently large energy}, see Theorem~\ref{thm:main}. This property is a  3D version of Nadirashvili's and Shnirelman's theorems on the existence of wandering solutions for the  Euler equation on a 2D annulus~\cite{Na91} and on the $2$-torus~\cite{Sh97}. Second, \emph{if two divergence-free vectorfields
have different integrability spectra and their vorticities are integrable,
then any sufficiently small $C^k$-neighborhoods of these vectorfields do not mix under the Euler flow}, see Theorem~\ref{Thlast}. 
Both results hold for any closed Riemannian manifold, so the metric does not play a relevant role, in particular negative curvature does not imply mixing for the 3D Euler flow in the $C^k$-topology. 

In the particular case of $\T^3$ and $\es^3$, we prove the existence of open domains of divergence-free vectorfields with fixed energy and helicity, in the $C^k$-topology ($k>4$), which cannot approach certain steady solutions under the evolution of the Euler equation,  and in the case of $\T^3$ we show that there are pairs of steady states that cannot be joint by a heteroclinic connection.

We would like to remark that, while any smooth invariant
of the vorticity is a conserved quantity (a Casimir  functional) of the Euler flow on
account of Kelvin's circulation theorem, we are not aware of any use
of this invariant to study the asymptotic behavior of the 3D Euler
equation. In particular, the integrability spectrum $\{\vk_a\}$ that we introduce in this paper is
especially suited to give information on the Euler flow near steady
states. This approach might be useful in constructing other conserved
quantities  in order to analyze the 3D Euler equation combining tools
from dynamical systems and PDEs.

The paper is organized as follows. In Section~\ref{s1} we recall some basic facts of exact divergence-free vectorfields and Hodge theory of closed manifolds. A KAM theorem for divergence-free vectorfields satisfying appropriate nondegeneracy conditions is stated in Section~\ref{s2}. The  conserved quantities $\vk$ and $\vk_a$, their main properties and some intermediate constructions are introduced in Sections~\ref{s3} and \ref{isotopy}, where we also give examples of steady solutions to the Euler equation which are integrable and nondegenerate. Finally, in Section~\ref{s4} we apply the previously developed machinery to prove some non-mixing properties of the Euler flow and to estimate the distance between time-dependent solutions of the Euler equation and nondegenerate divergence-free vectorfields.

\bigskip
 
{\bf Acknowledgments.} This paper was conceived when one of us (B.K.)
was visiting \'Ecole Polytechnique in 2011. We are very grateful for comments and discussions to Y.~Eliashberg, B.~Fayad, J.~Fern\'andez de Bobadilla, L.~Polterovich, F.~Presas,  M.~Sevryuk,  and A.~Shnirelman. We also thank two reviewers for valuable suggestions and corrections. The research of B.K. was partially supported by CNRS and NSERC research grants. B.K. is also 
grateful to the Max-Planck Institute in Leipzig for support and kind hospitality.
S.K. was  supported by l'Agence Nacionale de la Recherche through the grant ANR-10-BLAN 0102. D.P.-S was supported by the ERC Starting Grant 335079 and the Spanish MINECO grants MTM2010-21186-C02-01 and SEV-2011-0087.

\bigskip


\section{Divergence-free vectorfields on 3D Riemannian manifolds}   \label{s1}
All along this paper $M$ is a smooth ($C^\infty$) closed 3D manifold endowed with 
a smooth  Riemannian metric $(\cdot,\cdot)$ and the corresponding volume form $\mu$.
We shall assume this form to be normalized in such a way that the total volume of $M$ equals 1, that is
$$\int_M\mu=1\,.$$
The measure of a subset $U$ of $M$ with respect to the volume form $\mu$ will be denoted by $\text{meas}\,(U)$.
\begin{convention}
Since we shall consider analytic ($C^\omega$) functions in some parts of the paper, we establish the convention that if a function on a manifold $M$ is said to be analytic, then the manifold itself and the volume form $\mu$ are assumed to be analytic as well.
\end{convention}
A \vf $V$ on $M$ is called {\it divergence-free} or {\it solenoidal} (with respect to the volume form 
$\mu$), and we write $\diver V=0$, if the Lie derivative of the volume form along $V$ vanishes, 
i.e. $L_V\mu=0$, or equivalently, since $L_V=i_Vd+di_V$, if the 2-form $i_V\mu$ is closed. 
The field $V$ is called {\it exact divergence-free} or {\it globally solenoidal} (with respect to $\mu$)  
if the  2-form $i_V\mu$ is exact, cf.~\cite{AKh}. In local coordinates $(x,y,z)$, the volume form reads as $\mu=p(x,y,z)dx\wedge dy\wedge dz$ for some positive function $p$, and the divergence-free condition is written as
\begin{equation}\label{eqdivc}
\frac{\partial(pV_x)}{\partial x}+\frac{\partial(pV_y)}{\partial y}+\frac{\partial(pV_z)}{\partial z}=0\,.
\end{equation}

For a \vf  $V$  we denote by $V^\flat$ its dual 1-form, corresponding to
 $V$ with respect to the Riemannian structure, i.e., 
$(V, W)=V^\flat(W)$ for any \vf $W$ on $M$. 
It is well known~\cite{W} that a \vf $V$ is divergence-free if
and only if the 1-form $V^\flat$ is coclosed, i.e., $d^* V^\flat=0$, where $d^*$ is the codifferential operator. 
Recall that the {\it gradient} of a function $f$ on $M$ is a \vf $\nabla f$ defined by
$(\nabla f)^\flat=df$.  
 The  {\it vorticity}  field $U:=\rot V$ of a \vf $V$   is  defined by the relation
\begin{equation} \label{1.1}
i_U\mu=d(V^\flat).
\end{equation}
Clearly $U=\rot V$   is an exact divergence-free vectorfield, and $\rot \circ \nabla \equiv 0$.

\begin{example} \label{ex1.1}
{\rm
Consider the 3-torus $M=\T^3 = (\R/2\pi\Z)^3$ endowed with the flat metric, so that $\mu=dx\wedge dy\wedge dz$. Then a \vf 
 $V= f \partial_x+ g \partial_y+h  \partial_z$ is divergence-free if 
$\ \p f/\p x+\p g/\p y+\p h/\p z=0$, and is exact 
 divergence-free if, in addition, 
$$\int_M f\,dx\wedge dy\wedge dz=\int_M g\,dx \wedge dy \wedge dz=\int_M h\,dx\wedge dy\wedge dz= 0\,.$$ 
Indeed, the divergence-free condition for $V$ is equivalent to closedness of the 2-form 
 $i_V\mu$, while to be exact this 2-form  has to give zero when integrated against 
 any closed 1-form over $\T^3$. The above three relations are equivalent to
 $\int i_V\mu\wedge dx=\int i_V\mu\wedge dy=\int i_V\mu\wedge dz=0$.
In fact, the above condition of zero averages means that 
the divergence-free field $V$ ``does not move the mass center", and hence is 
exact on the torus. Actually, the averages of the functions $f,g,$ and $h$ represent 
the cohomology class of the field $V$, or equivalently, of the corresponding 2-form $i_V \mu$.  
 Also note that, in these flat coordinates, the vorticity of $V$  is given by the classical 
relation $\rot V=\nabla\times V$.
}
\end{example}

\smallskip

Let $C^k(M), k\ge 0$, be the H\"older space of order $k$ of functions on $M$ 
(for $k\in\N$ this  is the  space of $k$ times continuously differentiable functions), 
and $\Vect^k(M)$ be the space of vectorfields on $M$ of the same smoothness. 
  For $k\ge1$,  by $\SVect^k$ and $\SVect^k_{ex}$ we denote 
 the closed subspaces  of $\Vect^k (M)$, formed by the divergence-free  
 and exact divergence-free vectorfields, respectively.   
 The Helmholtz decomposition for vectorfields  is dual to the Hodge decomposition 
 for 1-forms (see~\cite{W, Taylor}) under 
 the duality  $V\mapsto V^\flat$. It states that any \vf $V\in \Vect^k, k\ge1$, can be uniquely decomposed into the sum 
 \begin{equation} \label{1.2}
V=\nabla f +W+\pi,
\end{equation}
where $W\in \SVect^k_{ex}$ and $\pi\in {\cal H}\subset\Vect^k$ is a harmonic vectorfield.  
The latter means that $\Delta\pi^\flat=0$ where $\Delta$ is the Hodge Laplacian or, equivalently, $d\pi^\flat=0$ and $d^*\pi^\flat=0$ (see~\cite{W}). It is easy to check that each vectorfield in this decomposition is $L^2$-orthogonal to the other components. By the Hodge theory the harmonic 
vectorfields are smooth; they form a finite-dimensional subspace of $\Vect^\infty(M)$, independent of $k$, 
whose dimension equals  the first Betti number of $M$. 
Moreover, the projections of $V$ onto $\nabla f$, $W$ and $\pi$ 
are continuous operators in $\Vect^k(M)$ if $k$ is not an integer. 
 A \vf $V$ is divergence-free if and only if its gradient component vanishes, i.e.  $\nabla f\equiv0$ in Eq.~\eqref{1.2}. 

In the following lemma we state some properties of the operator $\rot$ which will be useful later. The result is well known, but we provide a proof for the sake of completeness. 
 
 \begin{lemma}\label{l11}
 For $k\ge2$ the vorticity operator defines a continuous  map
 \begin{equation} \label{1.3}
 \rot: \SVect^k(M) \to  \SVect^{k-1}_{ex}(M)\,.
 \end{equation}
 If $k>2$ is not an integer, then the map is surjective and its
  kernel  is formed by harmonic vectorfields. 
 \end{lemma}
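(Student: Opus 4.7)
The plan is to split the lemma into four small pieces and handle each using Hodge theory on Hölder spaces.

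First, I would verify continuity of $\rot$ directly from its defining relation $i_{\rot V}\mu = d(V^\flat)$. The musical isomorphism $V \mapsto V^\flat$ is bounded on $C^k$ scales, the exterior derivative $d$ is bounded from 1-forms to 2-forms with loss of one derivative, and the map that recovers a vectorfield $U$ from $i_U\mu$ is also bounded. Composing these three yields a continuous map $\SVect^k \to \Vect^{k-1}$, and its image automatically lies in $\SVect^{k-1}_{ex}$ because $i_{\rot V}\mu = dV^\flat$ is manifestly exact.

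For the kernel description, suppose $V \in \SVect^k$ with $\rot V = 0$. Then $dV^\flat = 0$, so $V^\flat$ is closed; combined with the divergence-free condition $d^* V^\flat = 0$, the 1-form $V^\flat$ is both closed and coclosed, hence harmonic. Conversely, every harmonic vectorfield lies in the kernel by the same two relations.

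The essential step is surjectivity. Given $W \in \SVect^{k-1}_{ex}$, set $\omega := i_W\mu$, which is an exact 2-form of class $C^{k-1}$. I look for $V \in \SVect^k$ with $V^\flat =: \xi$ satisfying simultaneously $d\xi = \omega$ and $d^*\xi = 0$, which forces $\Delta\xi = d^*\omega$ for the Hodge Laplacian. Let $G$ be the Green's operator for $\Delta$ on 2-forms, i.e.\ the inverse of $\Delta$ on the $L^2$-orthogonal complement of the finite-dimensional space of harmonic 2-forms. Define
\begin{equation*}
\xi := d^* G \omega.
\end{equation*}
Because $G$ commutes with $d$ and $d^*$, and $\omega$ is exact (hence orthogonal to harmonic 2-forms so $\mathbb{H}\omega = 0$), while $d\omega = 0$ gives $d G\omega = G d\omega = 0$, a direct computation yields
\begin{equation*}
d\xi = d d^* G\omega = \Delta G\omega - d^* d G\omega = \omega, \qquad d^*\xi = d^* d^* G\omega = 0.
\end{equation*}
Thus $V := \xi^\sharp$ is divergence-free and satisfies $\rot V = W$.

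The main technical point, and the one I expect to require care, is the regularity claim: I need $\xi \in C^k$, and this is precisely where the assumption that $k$ is non-integer enters. By elliptic regularity for the Hodge Laplacian on the Hölder scale, $G$ gains two derivatives, so it is bounded $C^{k-1} \to C^{k+1}$ only when the exponents are non-integer (Schauder theory fails at integer exponents). Granting this, $\xi = d^* G\omega \in C^k$, hence $V \in \SVect^k$, completing the proof.
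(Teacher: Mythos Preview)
Your proof is correct. It differs from the paper's argument in presentation, though both rest on the same Hodge-theoretic input.

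For surjectivity, the paper first picks \emph{any} primitive $\bar V^\flat$ of the exact 2-form $i_U\mu$, invokes Hodge regularity to put $\bar V^\flat$ in $C^k$ (this is where non-integrality of $k$ enters), and then applies the Helmholtz decomposition $\bar V = \nabla f + W + \pi$ to discard the gradient part, setting $V = W + \pi$. You instead construct the coclosed primitive in one shot via $\xi = d^*G\omega$, which is the explicit formula underlying the Hodge decomposition anyway; your route is a bit more direct and makes the regularity step (Schauder estimates for $G$) more transparent. For the kernel, the paper again passes through the Helmholtz decomposition $V = W + \pi$ and argues $W$ must vanish, whereas you simply observe that $dV^\flat = 0$ and $d^*V^\flat = 0$ together mean $V^\flat$ is harmonic --- cleaner, and with the same content.
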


 \begin{proof}
 In local coordinates $\rot$ is a first-order differential operator. 
 Since its image is formed by exact divergence-free 
 vectorfields, then for $k\ge2$ this map defines a continuous linear operator~\eqref{1.3}.  Let $U \in\SVect^{k-1}_{ex}$ be an exact divergence-free vectorfield.  Then $U$  satisfies Eq.~\eqref{1.1} with a suitable 1-form $\bar V^\flat$.  
 If $k$ is not an integer, then by the Hodge theory, $\bar V^\flat$ is $C^k$-smooth~\cite{W}, whence  
 $\bar V\in \Vect^k$.  
 Consider the decomposition~\eqref{1.2} for the field  $\bar V=\nabla f +W+\pi$. 
Since $\rot\nabla f=0$, take a new field $V=W+\pi\in \SVect^k$. By construction, 
$\rot V=\rot \bar V=U$, i.e.  the mapping \eqref{1.3} is surjective.
 
 Let $V$ belong to the kernel of~\eqref{1.3}. Since $\diver V=0$, then 
  in the decomposition~\eqref{1.2} we have $\nabla f=0$. As we explained above, 
 $\rot\pi=0$, so $\rot W=0$.  Then $d W^\flat=0$ and $d^* W^\flat=0$, hence being $L^2$-orthogonal to harmonic forms, this implies that $W=0$. Accordingly, $V=\pi$ is a harmonic vectorfield. Since any such \vf is divergence-free, the lemma is proved. 
\end{proof}

\bigskip


\section{A KAM theorem for divergence-free vectorfields.
}   \label{s2}

Let $V$ be a $C^{k}$ divergence-free  \vf ($k\ge 1$) on a closed $3$-manifold $M$ endowed with a volume form $\mu$, and let $T^2\subset M$ be an invariant 2-torus of $V$ of class $C^{k}$. We assume that in a  neighbourhood $\Om$ of the torus $T^2$, one can construct $C^{k}$-coordinates $(x,y,z)$, 
where $(x,y)\in \T^2=(\R/ 2\pi\Z)^2$ and  $z\in(-\gamma, \gamma)$ $(\gamma>0$), such that $\mu|_{\Om}=dx\wedge dy\wedge dz$ and 
$T^2=\{z=0\}$.

\begin{remark}
An invariant torus $T^2$ can be embedded in a $3$-manifold $M$ in many non-equivalent ways. In this and the next section the embedding of $T^2$ is not relevant since our analysis is in a tubular neighborhood $\Om$ of the torus, which is diffeomorphic to $\T^2\times(-\gamma,\gamma)$ independently of the embedding. The different ways an invariant torus can be embedded in $M$ will be exploited later. \end{remark}

Next we state a KAM theorem for divergence-free vectorfields, for which we assume that there are functions $f(z), g(z)$ of class $C^{k}$ defined on $(-\gamma, \gamma)$, and a Borelian subset $Q\subset(-\gamma,\gamma)$, satisfying the following  KAM nondegeneracy conditions:
\begin{enumerate}
\item For each $z\in Q$ we have $f^2(z)+g^2(z)\ne0$, 
and the torus $\T^2\times \{z\}$ is invariant for the vectorfield $V$, which assumes the form (for this value of $z$): 
\begin{equation}\label{1}
\dot x =f(z),\quad \dot y=g(z),\quad \dot z=0.
\end{equation}
\item The Wronskian of $f$ and $g$ is uniformly bounded from 0 on $Q$, i.e. there is a positive $\tau$ such that for each $z\in Q$ we have the twist condition:
\begin{equation}\label{2}
|f'(z)g(z)-f(z)g'(z)|\geq \tau>0 \,.
\end{equation} 
\end{enumerate}

Observe that Condition $1$ above implies that $i_V(dx\wedge dy\wedge dz)$ is exact in the domain $\Om$ (in particular, $V$ is divergence-free), provided that $Q=(-\gamma,\gamma)$. Indeed, 
$$
\alpha:=i_V(dx\wedge dy\wedge dz)=-g(z)dx\wedge dz+f(z)dy\wedge dz\,,
$$
so $\alpha=d\beta$, where $\beta$ is the $1$-form
$$
\beta=\Big(\int_0^zg(s)ds\Big)dx-\Big(\int_0^zf(s)ds\Big)dy\,.
$$

We also note that if $V$ satisfies Conditions~1 and~2 and is divergence-free with respect to a  volume
form $p(x,y,z)dx\wedge dy\wedge dz$, then it is easy to check that we must have $p=p(z)$ (see Eq.~\eqref{eqdivc}). Modifying the coordinate $z$ to a suitable
$\tilde z(z)$ we achieve that $\mu|_{\Om}=dx\wedge dy\wedge d\tilde z$, and  Conditions~1 and~2 still hold. So
the assumption that $\mu|_{\Om}=dx\wedge dy\wedge d z$ in  coordinates $(x,y,z)$ is in fact a consequence of Conditions~1 and~2.

Consider an exact divergence-free vector field $W$ of class $C^{k}$ and denote by $\e:=\|V-W\|_{C^{k}}$ the $C^{k}$-distance between $V$ and $W$.

\begin{theorem}\label{t1}
Assume that the divergence-free vectorfield $V$ satisfies the previous assumptions 1-2 with $Q=(-\gamma,\gamma)$. Then there are real numbers $k_0$ and $\e_0=\e_0(V)>0$ such that if $k>k_0$ and $\e<\e_0$, there exists a $C^1$-diffeomorphism  $\Psi:(x,y,z)\mapsto ({\bar x},{\bar y},{\bar z})$, preserving the volume $\mu$, a  Borelian set $\bar Q\subset (-\gamma,\gamma)$, and 
$C^{1}$-functions ${\bar f}(z),{\bar g}(z)$ such that 
\begin{itemize}
\item meas$\, ((-\gamma,\gamma) \setminus \bar Q\big)\leq C\tau^{-1}\sqrt{\epsilon}$ as $\e\to0$, where $C$ depends on $k$ and the $C^k$-norm of $V$, and $\tau$ is defined in the inequality~\eqref{2}.
\item $ \|\Psi-\,$id$\|_{C^1}\to0$ as $\e\to0$,
\item $ \|f-{\bar f}\|_{C^{1}} + \|g-{\bar g}\|_{C^{1}} \to0$ as $\e\to0$,
\item for $\bar z\in \bar Q$ the \vf $W$ transformed by the diffeomorphism $\Psi$ assumes the form
\begin{equation*} 
\dot {\bar x} ={\bar f} ({\bar z}),\quad \dot {\bar y}={\bar g}({\bar z}),\quad \dot {\bar z}=0,
\end{equation*}
and the ratio  $(\bar f/\bar g)(\bar z)$ is an irrational number. 
\end{itemize}
\end{theorem}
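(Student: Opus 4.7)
The strategy is to reduce Theorem~\ref{t1} to Moser's twist theorem for exact area-preserving perturbations of an integrable twist map of the annulus, applied to the Poincar\'e return map on a transverse section. Since $f^2+g^2\ne 0$ on $(-\gamma,\gamma)$, I may assume, after possibly interchanging $x$ and $y$ and shrinking $\gamma$, that $g(z)\ne 0$ throughout. Take the transverse section $\Sigma=\{y=0\}\cap\Om$, a cylinder with coordinates $(x,z)\in\T\times(-\gamma,\gamma)$. From~\eqref{1} the Poincar\'e first-return map of $V$ is
\begin{equation*}
P_V(x,z)=\Bigl(x+\frac{2\pi f(z)}{g(z)},\,z\Bigr),
\end{equation*}
an integrable twist map whose twist $(f/g)'(z)=(f'g-fg')/g^2$ is bounded away from zero by the nondegeneracy~\eqref{2} (and an upper bound on $|g|$).

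For $\varepsilon$ sufficiently small the flow of $W$ is transverse to $\Sigma$ as well, so the Poincar\'e map $P_W$ is well defined on $\Sigma$ and, by smooth dependence of ODEs on parameters, is $C^{k-1}$-close to $P_V$ with distance of order $\varepsilon$. The standard correspondence between volume-preserving flows and area-preserving return maps shows that $P_W$ preserves the area form $i_W\mu|_\Sigma$, which is $O(\varepsilon)$-close to $-g(z)\,dx\wedge dz$; moreover, because $W$ is \emph{exact} divergence-free, $P_W$ is exact area-preserving (zero Calabi invariant / intersection property), precisely the class to which Moser's theorem applies.

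Now invoke Moser's twist theorem in its sharp $C^k$ version: there is a threshold $k_0$ such that for $k>k_0$, an exact area-preserving $C^k$-small perturbation of an integrable twist map of the annulus leaves invariant a Cantor family of smooth closed curves carrying Diophantine rotation numbers, each $C^1$-conjugate to a rigid rotation, and the complement of their union has measure at most $C\tau^{-1}\sqrt{\varepsilon}$, where $C$ depends on $k$ and the $C^k$-norm of $V$. Propagating these invariant circles by the flow of $W$ yields a Cantor family of $W$-invariant $2$-tori, parameterised by a Borelian set $\bar Q\subset(-\gamma,\gamma)$. Combining the conjugacy on $\Sigma$ with the return-time function in the $y$-direction, the field $W$ on each surviving torus is $C^1$-conjugate to a linear flow $\dot{\bar x}=\bar f(\bar z),\,\dot{\bar y}=\bar g(\bar z),\,\dot{\bar z}=0$ with Diophantine ratio $\bar f(\bar z)/\bar g(\bar z)$.

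The final step is to glue the per-torus conjugacies into a single $C^1$ volume-preserving diffeomorphism $\Psi$ of $\Om$ with $\|\Psi-\mathrm{id}\|_{C^1}\to 0$. The KAM scheme provides conjugacies depending Whitney-$C^1$-smoothly on $\bar z\in\bar Q$; a Whitney extension produces a $C^1$ diffeomorphism on the whole $(x,z)$-cylinder, which extended along $W$-orbits yields a $C^1$ diffeomorphism of $\Om$ that is already volume-preserving on $\bigcup_{\bar z\in\bar Q}\T^2\times\{\bar z\}$. To enforce $\Psi_*\mu=\mu$ globally I would apply a Moser trick: any $C^1$-small diffeomorphism may be corrected by the time-one map of a $C^1$-small divergence-free flow to push $\Psi_*\mu$ to $\mu$ without affecting the invariant tori on which the identity already held. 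The functions $\bar f,\bar g$ are then defined on $\bar Q$ by the linearized frequencies and extended by $C^1$ interpolation to $(-\gamma,\gamma)$ so that $\|f-\bar f\|_{C^1}+\|g-\bar g\|_{C^1}\to 0$. The most delicate point, which determines the threshold $k_0$, is the verification of Whitney-$C^1$ regularity of the KAM conjugacy in the normal parameter together with the careful accounting of derivative losses when passing to the Poincar\'e section; both are classical but require bookkeeping.
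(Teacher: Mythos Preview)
Your approach is essentially the same as the paper's: reduce to a Poincar\'e section, note that the unperturbed return map is a nondegenerate twist map, use exactness of $W$ to obtain exactness of the perturbed return map, and invoke Moser's twist theorem in its finite-smoothness form. The paper's sketch (given as a remark rather than a formal proof) cites Herman to pin down $k_0=3$, and otherwise proceeds just as you do.

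One point the paper makes explicit and you gloss over: the return map $P_W$ preserves an area form $A_W$ that is only $O(\varepsilon)$-close to the one preserved by $P_V$, and Moser's twist theorem in its standard formulation compares two maps preserving the \emph{same} area form. The paper handles this by first applying Moser's 1965 volume-element lemma on the annulus to find a near-identity diffeomorphism $\Phi_\varepsilon$ with $\Phi_\varepsilon^*A_W=dx\wedge dz$, then applying the twist theorem to the conjugated map $\Phi_\varepsilon^{-1}\circ P_W\circ\Phi_\varepsilon$. You invoke a Moser trick only at the end, in three dimensions, to enforce $\Psi_*\mu=\mu$; it would be cleaner (and closer to the paper) to use it already on the section. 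Conversely, your discussion of the Whitney extension and the passage from the family of invariant circles back to a single $C^1$ diffeomorphism $\Psi$ on $\Om$ fills in detail the paper simply leaves to the reader.
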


Herman's theorem~\cite{He83} on the class of differentiability, for which Moser's twist
theorem holds, implies that one can take
 $k_0=3$. By~\cite{He83}  and 
 a recent theorem of Cheng and Wang~\cite{C-W}, for closely related KAM-results the $C^3$-smoothness is sharp, so it is very plausible that 
 for $k_0<3$ the assertion of Theorem~\ref{t1} is false. In any case, Theorem~\ref{t1} holds for any
\begin{equation}\label{Mos}
k> 3\,. 
\end{equation}
For the case of Hamiltonian systems, the reader can consult~\cite{Pos,Sal}. (Note that the smoothness of $V$ and $W$ corresponds in~\cite{Pos, Sal} not to the smoothness of the Hamiltonian $H$, but to that of the Hamiltonian vectorfields $J\nabla H$.) For the case when the vectorfield $V$ is analytic, this result is proved in~\cite{Do82}.

\begin{remark}
The KAM theorem stated above can be proved by 
taking a  Poincar\'e section and reducing the problem to area-preserving perturbations of a twist map in the annulus in that section. Indeed, assume that $g(z)\ne0$ for any $z\in(-\gamma,\gamma)$ in (\ref{1}) (the case $f(z)\ne0$ is similar) and
consider the Poincar\'e section $\{y=const\}$. On account of Condition~1, the Poincar\'e map $\Pi_0$ of $V$ is a diffeomorphism of the annulus $\R/ 2\pi\Z\times (-\gamma,\gamma)$ given by
$$
(x,z)\mapsto \Big(x+\frac{2\pi f(z)}{g(z)},z\Big)\,,
$$
which is a nondegenerate twist map as a consequence of Condition~2. Obviously, the map $\Pi_0$ preserves the area form $dx\wedge dz$, which is, generally speaking, different from the area form $A_W$ preserved by the perturbed Poincar\'e map $\Pi_\epsilon$ associated to the vector field $W$. This problem can be overcome by using Moser's trick~\cite{Mo65}, which ensures that there exists a diffeomorphism $\Phi_\epsilon$ of the annulus, so that $\Phi_\epsilon^*A_W=dx\wedge dz$, 
and this diffeomorphism is close to the identity because $\Pi_0$ and $\Pi_\epsilon$ are close.
Then the theorem  follows by applying Moser's twist theorem~\cite{He83} to the map $\Phi_\epsilon^{-1}\circ \Pi_\epsilon\circ \Phi_\epsilon$, which is conjugated to $\Pi_\epsilon$. Observe that the perturbed Poincar\'e map $\Pi_\epsilon$ is exact, so that Moser's theorem is applicable, see Remark~\ref{R:exactP} below. The details are left to the interested reader. Notice that the use of a Poincar\'e section in the proof implies that the result does not depend on the 
 parametrization of the vectorfield.  
\end{remark}

\begin{remark}\label{R:exactP}
The condition on the exactness of the divergence-free vectorfield $W$ in Theorem~\ref{t1} is automatically satisfied if $W$ is a vorticity field. This assumption is crucial for the KAM theorem to hold. Indeed, if the $2$-form $i_W\mu$ is exact, the Stokes theorem implies that the flux of the vectorfield $W$ across any closed surface is zero, and in particular 
$$
\int_{T^2}i_W\mu=0\,,
$$
which implies the exactness of the Poincar\'e map, a necessary condition for applying Moser's twist theorem. 
\end{remark}

Other KAM theorems in the context of volume-preserving maps have been obtained in~\cite{C-S}, while for divergence-free vectorfields the reader can consult~\cite{Sevr,BHT}. These references work in the analytic ($C^\omega$) setting. The fact that their assertions remain 
true for vectorfields of finite smoothness follows from the reduction to Moser's twist theorem,   explained in Remark~3.3.

\bigskip


\section{A measure of non-integrability of divergence-free vectorfields.}   \label{s3}

The main goal of this section is to define a functional on the set of exact divergence-free vectorfields which measures how far a \vf $V$ is from an integrable nondegenerate vectorfield. 
We start by introducing a precise definition of integrability,  inspired by the celebrated Arnold structure theorem for 3D steady flows~\cite{Arn66} (see also~\cite[Section II]{AKh}).  

\begin{definition}\label{def}
(1) If $V$ is a divergence-free vectorfield with an invariant domain $\Om_j\cong\T^2\times (-\gamma_j,\gamma_j)$ covered by invariant tori of $V$, and $V$ satisfies condition $1$ in Section~\ref{s2} for $z\in Q_j=(-\gamma_j,\gamma_j)$, we say that $V$ is \emph{canonically integrable on} $\Om_j$.
\\  
(2) A divergence-free
 $C^1$-\vf $V$ on $M$  is called {\it  (Arnold) integrable}, if there is a closed subset 
$G\subset M$, $\rm{meas}\,(G)=0$, such that its complement $M\setminus G$ 
is a union of a finite or countable system 
of $V$-invariant domains $\Om_j$, where $V$ is canonically integrable. If the system of domains $\Om_j$ is finite, $V$ is called  {\it well integrable}.
\\
(3) An  Arnold integrable \vf $V$ is called  {\it nondegenerate} if the system of domains $\Om_j$ can be 
chosen in such a way that condition $2$ in Section~\ref{s2} holds for each $j$.\\
(4) 
 If the set $G$ introduced above is any Borel subset of $M$ and we do not 
require $Q_j$ to be the whole interval  $(-\gamma_j,\gamma_j)$, 
 then $V$ is called {\it partially integrable on $M$} and 
{\it integrable on $M$ outside $G$ with ``holes", corresponding to 
$\T^2\times \big((-\gamma_j,\gamma_j)\setminus Q_j\big)$, $j\ge1$. }
\end{definition} 

We recall that an invariant torus $T^2$ of a vectorfield $V$ on $M$ is called \emph{ergodic} if the field is nonvanishing on the torus and some trajectory of $V$ is dense in $T^2$. Clearly, an invariant torus $T^2\subset \Om_j$, $T^2\cong \T^2\times\{z\}$, $z\in Q_j$, as in Definition~\ref{def} above, is ergodic if the number $f(z)/g(z)$ is finite and irrational. 

\begin{example} \label{ex3.2}
{\rm
Let $u$ be a steady solution of the 3D Euler equation~\eqref{4.1} in $M$, i.e.
$$
\nabla_u u=-\nabla p,
\;\;\; \diver u=0
\qquad\text{in}\quad M\,.
$$
This equation can be rewritten as 
$$
i_\omega i_u\mu=d\alpha\,,
$$
where $\omega:=\rot u$ is the {\it vorticity}, $\alpha:=p+|u|^2/2$ is the {\it Bernoulli function} and $\mu$ is the volume form on $M$. It is immediate that the Bernoulli function is a first integral for both the vectorfields $u$ and $\omega$. The regular level sets of $\alpha$ must be 2-tori, since they admit non-vanishing vectorfields tangent to them, see \cite{Arn66} or \cite[Section II]{AKh}. Assume that 
the set of critical points for the function $\alpha$ has zero measure.\footnote{Note that this 
non-degeneracy property fails for the important class of {\it Beltrami solutions}, defined by the 
equation $\rot  u=\lambda u$ ($\lambda$ is a constant).
}  Then the vorticity $\omega$ is Arnold integrable in $M$.
}
\end{example}

The minimal set $D$ for which $V$ is integrable on  $M\setminus D$  
measures the non-integrability of $V$. To develop this idea we define the following functional:

\begin{definition}
The \emph{partial integrability functional} $\vk$ 
on the space of $C^1$  exact divergence-free vector fields
$$
\vk:\SVect^1_{ex}(M)\to [0,1]\,
$$
assigns to a vectorfield $V\in \SVect^1_{ex}(M)$ the inner measure\footnote{We recall that the inner measure of a set is the supremum of the measures of its compact subsets. We use here the inner measure rather than the Lebesgue 
measure to avoid the delicate issue of whether the union of ergodic invariant tori is measurable or not (cf. also Definiton~\ref{defka}).} of the set
 equal to the union of all  ergodic 
$V$-invariant two-dimensional $C^1$-tori.
 Since the total measure of $M$ is normalized by $1$,  then $\vk\in[0,1]$. 
\end{definition}

The functional $\vk$ does not distinguish between different isotopy classes of invariant tori. In the following section we shall define a version of this functional taking into account the way in which invariant tori are embedded in $M$. Before describing properties of $\vk$, let us provide explicit examples of stationary solutions to 3D Euler whose vorticities 
are integrable and nondegenerate. The examples are on $\T^3$ and $\es^3$, with the canonical metrics. 

\begin{example}(Existence of steady solutions of the Euler equation in $\T^3$ whose vorticities are integrable and nondegenerate). \label{extorus}
\rm{
Consider the divergence-free vectorfield $u^z$ defined by
$u^z= f(z)\partial_x+ g(z)\partial_y$, where $f$ and $g$ are analytic nonconstant $2\pi$-periodic functions. The function $z$ is a first integral of $u^z$, hence the trajectories of this vectorfield are tangent to the tori $T_c:=\{z=c\}$, and on
each torus the field is linear.
The same happens with the vorticity $\rot u^z= -g'(z) \partial_x + f'(z) \partial_y$. Now note that:
\begin{enumerate}
\item The fields $u^z$ and $\rot u^z$ commute, $[u^z, \rot u^z]=0$, and the Bernoulli function is given by $\alpha=\frac{1}{2}(f^2+g^2)$. This implies that $u^z$ is
a solution of the steady Euler equation on $\T^3$, cf. Example~\ref{ex3.2}.
\item For generic $f$ and $g$ the field $\rot u^z$ satisfies the nondegeneracy conditions $1$ and $2$ of Section~\ref{s2} everywhere except for finitely many values of $z$. 
\end{enumerate}
Therefore, one has $\vk(\rot u^z)=1$.  Note that all the invariant tori of $\rot u^z$ are in one and the same isotopy class, which is nontrivial, because the tori $T_c$ are  homologically nontrivial.
Similarly, one can construct steady solutions  $u^x$ and $u^y$
of the Euler equation on $\T^3$ whose invariant tori are given by $\{x=c\}$ or $\{y=c\}$, and hence not isotopic to the tori $\{z=c\}$.
}
\end{example}

\begin{example}(Existence of steady solutions of the Euler equation in $\mathbb S^3$ whose vorticities are integrable and nondegenerate.)\label{exsphere}
\rm{
It is convenient to represent $\mathbb S^3$ as the set of points $\{(x,y,z,w)\in\R^4:x^2+y^2+z^2+w^2=1\}$. Consider the Hopf fields $u_1$ and $u_2$ on $\es^3$ that satisfy the equations $\rot u_1=2u_1$ and $\rot u_2=-2u_2$, and hence they are divergence-free. In coordinates these fields read as $u_1=(-y,x,w,-z)|_{\es^3}$ and $u_2=(-y,x,-w,z)|_{\es^3}$. It is evident that the function $F:=(x^2+y^2)|_{\es^3}$ is a first integral of both $u_1$ and $u_2$, and its regular level sets are tori, so these vectorfields are well integrable. Since all the trajectories of $u_i$ are periodic, they are completely degenerate. It is not difficult to check the following properties:
\begin{itemize}
\item $(u_1,u_1)=(u_2, u_2)=1$ and $(u_1, u_2)=2F-1$, where $(\cdot,\cdot)$ is the scalar product on $\es^3$ with respect to the round metric.
\item $u_1\times u_2=-\nabla F$, where $\times$ and $\nabla$ are the vector product and the gradient operator, respectively, on $\es^3$ with respect to the round metric. 
\end{itemize}    
Now define the vectorfield
$$
u:=f(F)u_1+g(F)u_2\,,
$$
where $f,g$ are analytic functions. Of course, the field $u$ is divergence-free because $F$ is a first integral of $u_i$, and  it is non-vanishing whenever $f^2+g^2\neq 0$ because $\{u_1,u_2\}$ defines a basis on each level set of $F$. After a few straightforward computations we get
\begin{align*}
&\rot u=[f'(2F-1)+2f+g']u_1-[g'(2F-1)+2g+f']u_2\,,\\
&u\times \rot u=[ff'+gg'+4fg+(2F-1)(fg'+gf')]\nabla F\,.
\end{align*}
Therefore, defining $H(F):=ff'+gg'+4fg+(2F-1)(fg'+gf')$, we conclude that $u$ is a steady solution of the Euler equation on $\es^3$ with Bernoulli function $\alpha=\int_0^FH(s)ds$, cf. Example~\ref{ex3.2}. The vorticity $\rot u$ is well integrable ($F$ is a first integral) and for generic choices of $f$ and $g$ it is nondegenerate. Therefore, $\vk(\rot u)=1$. Moreover, all the invariant tori of $\rot u$ are in the same isotopy class, which is in fact the trivial one because the tori are unknotted.   
}
\end{example} 

The proposition below summarizes the main elementary properties of the partial integrability functional $\vk$.

\begin{proposition}\label{p}
The partial integrability functional $\vk$ satisfies the following properties:
\begin{enumerate}
\item If $V$ is partially integrable and for some $j$ and $z\in Q_j$ which is a
 point of density for $Q_j$ the corresponding functions $f$ and $g$ satisfy the twist condition~\eqref{2}, then $\vk(V)>0$.
\item If $V$ is Arnold integrable and nondegenerate, then $\vk(V)=1$.
\item If $\Phi$ is a volume-preserving $C^2$-diffeomorphism of $M$, then $\vk(V)=\vk(\Phi^*V)$.
\item If all the trajectories of $V$ in the complement of an invariant zero-measure subset of $M$ are periodic, then $\vk(V)=0$. The same result holds if $V$ has two first integrals which are independent almost everywhere on $M$. 
\item If a domain $\Om\subset M$ is $V$-invariant and all the
 trajectories of $V$ in $\Om$ have positive maximal Lyapunov exponent, then $\vk(V)\le1-\,$meas$\,(\Om)$.
\item Let $\xi(t)$ be a trajectory of $V$ and denote its closure in $M$ by $\cF$. Then $\vk(V)\le 1- \,$meas$\, (\cF)$.
\end{enumerate}
\end{proposition}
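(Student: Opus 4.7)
I would prove the six items of the proposition separately, since each rests on different ingredients.

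\textbf{Items (1) and (2).} My plan is to use the twist condition~\eqref{2} to produce ergodic tori of positive measure. At a density point $z_0\in Q_j$ at which \eqref{2} holds, the rotation number $z\mapsto f(z)/g(z)$ (or its reciprocal, whichever is bounded) is a strictly monotone $C^1$ function, hence a local homeomorphism near $z_0$. Since the irrationals form a full-measure subset of $\R$, their preimage meets $Q_j$ in a set of positive Lebesgue measure near $z_0$. For each such $z$, condition~1 of Section~\ref{s2} makes $\T^2\times\{z\}$ an invariant 2-torus with linear irrational flow, hence minimal and ergodic in the sense of Definition~\ref{def}. This yields $\vk(V)>0$ and proves~(1). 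For~(2), Arnold integrability together with nondegeneracy gives $Q_j=(-\gamma_j,\gamma_j)$ with the twist condition everywhere, so the argument above shows ergodic tori fill $\Om_j$ up to a null set; summing over $j$ and using $\text{meas}(G)=0$ gives $\vk(V)=1$.

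\textbf{Items (3)--(5).} For~(3), a volume-preserving $C^2$-diffeomorphism $\Phi$ conjugates the flow of $V$ to that of $\Phi^*V$, so ergodic invariant tori of $V$ correspond bijectively via $\Phi^{-1}$ to ergodic invariant tori of $\Phi^*V$ with equal volume, and the inner measure of their union is preserved. For~(4), an ergodic (hence minimal) 2-torus cannot consist of periodic orbits (a periodic orbit is a circle, never dense on a 2-torus), while two a.e.-independent first integrals must be constant on any minimal invariant torus and thus force it into a generically 1-dimensional common level set---impossible for a 2-torus outside the null set where the integrals are dependent. For~(5), in canonical coordinates around an ergodic torus the flow is linear irrational, and its differential $D\phi_t$ grows at most linearly in $t$, so the maximal Lyapunov exponent vanishes; hence no ergodic torus can lie in $\Om$, whence $\vk(V)\le 1-\text{meas}(\Om)$.

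\textbf{Item (6).} Denote by $\cE\subset M$ the union of all ergodic invariant 2-tori, so $\vk(V)=\text{meas}_*(\cE)$. If $\xi$ lies on some ergodic torus $T_0$, then $\cF=T_0$ has zero 3-volume and the inequality is trivial. Otherwise, invariance gives $\xi\cap T=\emptyset$ for every ergodic torus $T$, and since such $T$ is minimal, the closed invariant set $T\cap\cF$ is either empty or all of $T$. Consequently $\cE\cap\cF$ is a disjoint union of entire ergodic tori, each of which the orbit accumulates on from its complement. The plan is to show this family is at most countable, so that $\text{meas}_*(\cE\cap\cF)=0$ and hence
\[
\vk(V)=\text{meas}_*(\cE)=\text{meas}_*(\cE\setminus\cF)\le\text{meas}(M\setminus\cF)=1-\text{meas}(\cF).
\]
Countability should come from the fact that distinct minimal 2-tori admit pairwise disjoint tubular neighbourhoods, and the 1-dimensional orbit $\xi$, in order to accumulate on such a torus, must eventually enter the corresponding neighbourhood; a pigeonhole argument using compactness of $M$ then bounds the family by a countable set.

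The main obstacle is item~(6): a priori a compact 3-manifold carries uncountable families of pairwise disjoint 2-tori (as in a canonical chart $\T^2\times(-\gamma,\gamma)$), so excluding such a configuration inside a single orbit closure requires genuine use of the minimality of ergodic tori together with the 1-dimensionality of $\xi$. Making the pigeonhole/compactness step rigorous---and in particular handling orbits whose closure is a ``fat'' set that might interleave with a Cantor-type family of KAM tori---is the point at which most care will be needed.
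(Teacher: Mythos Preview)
Your treatment of items (1)--(5) is fine and is exactly what the paper intends when it says these ``follow almost straightforwardly from the definitions.''

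For item~(6), your reduction is correct: it suffices to show that $\cE\cap\cF$ has measure zero, where $\cE$ is the union of ergodic invariant tori. But the countability strategy you propose does not work, and the obstacle you flag is real and fatal to that approach. The claim that ``a pigeonhole argument using compactness of $M$ bounds the family by a countable set'' is false as stated: a single smooth arc can enter uncountably many pairwise disjoint tubular neighbourhoods---think of the segment $t\mapsto(0,0,t)$ in $\T^2\times(-\gamma,\gamma)$, which visits a neighbourhood of every torus $\{z=c\}$. One-dimensionality of $\xi$ and disjointness of the neighbourhoods give no cardinality bound, so the pigeonhole step cannot be made rigorous along these lines.

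The paper's argument is different and avoids counting altogether. Working in a chart $\Om_j\cong\T^2\times(-\gamma_j,\gamma_j)$ in which the ergodic tori are level sets $\{z=c\}$, one uses the \emph{separation} property of an invariant torus together with the intermediate value theorem. If the $z$-projection $J:=\pi_z(\tilde\Om_j\cap\cF)$ contained three points $z_1<z_2<z_3$, then the orbit $\xi$ would have to take $z$-values both below and above $z_2$; by continuity it would hit $\{z=z_2\}$, and invariance of that torus would trap $\xi$ on it, forcing $J=\{z_2\}$---a contradiction. Hence $\text{meas}(J)=0$, so $\tilde\Om_j\cap\cF$ is null for each $j$, and $\cE\cap\cF$ is null. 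The point you are missing is not a counting principle but this crossing/trapping mechanism: an invariant $2$-torus locally disconnects its tubular neighbourhood, and the orbit cannot pass from one side to the other without lying on the torus forever. Replace your pigeonhole sketch with this IVT argument and item~(6) goes through.
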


\begin{proof}
Statements 1-5 follow almost straightforwardly from the definitions, so we leave their proofs to the reader. Let us focus on statement 6. Consider the domains $\Om_j$ as in Definition~\ref{def} and the sets $\tilde\Om_j\subset \Om_j$ consisting of ergodic invariant tori. It suffices to show that meas$\,(\tilde\Om_j\cap \cF)=0$ for each $j$. 
 In the coordinates $\big(x,y,z\big)$,
corresponding  to $\Om_j$, denote by $\pi_z$ the natural $z$-projection. 
Assume the contrary, i.e. that  the measure above is nonzero, then 
meas$\,(J)>0$, where $J:=\pi_z(\tilde \Om_j\cap \cF)$.  Choose  any three points $z_1<z_2<z_3$ in $J$. Since $z_1, z_3\in \pi_z(\cF)$,
then for suitable $t_1,t_3$ we have $\pi_z(\xi(t_1))<z_2< \pi_z(\xi(t_3))$.  By continuity, there exists $t_2\in(t_1,t_3)$ such that 
$\xi(t_2)\in (\pi_z)^{-1}(z_2)=:T^2$.  Since $T^2$ is an ergodic invariant torus under the flow of $V$, then $\xi(t)\in T^2$ for all $t$. Hence 
 $J=\{z_2\}$, which is of measure zero. This contradiction proves statement 6.
\end{proof}

The following theorem establishes that  the functional $\vk(V)$ is continuous at $V$ if the vectorfield
 is integrable and nondegenerate. This property, which will be the key in our study of the 3D~Euler dynamics, is a consequence of the KAM theorem, stated in Section~\ref{s2}, and generally fails at points which are not integrable nondegenerate vectorfields. Moreover, if $V$ is analytic we prove that $\vk$ is H\"older continuous at $V$, 
  using the properties of  analytic functions to control the contribution to $\vk(W)$ from a
neighborhood of the singular set of $V$. We recall that, by definition, a function is analytic on a closed set if it is analytic in a neighborhood of the set.

\begin{theorem}\label{t2} Let $V\in\SVect^{k}_{ex}(M) $ be an integrable nondegenerate vectorfield. Then the functional $\vk$ is continuous at $V$ in the $C^{k}$-topology, provided that $k> 3$. Moreover, if $V$ is analytic ($C^\omega$) and well integrable, then $\vk$ is H\"older-continuous at $V$ with some exponent $\theta>0$:
\begin{equation} \label{Hol}
\vk(V)=1\ge
\vk(W)\geq 1-C_V\|V-W\|^{\theta}_{C^{k}}\,,
\end{equation}
for all $W\in     \SVect^{k}_{ex}(M)$.
\end{theorem}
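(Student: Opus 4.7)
The starting point is Proposition~\ref{p}(2), which gives $\vk(V)=1$; hence both continuity and Hölder continuity at $V$ reduce to establishing a lower bound $\vk(W) \ge 1 - (\text{small})$ in terms of $\e := \|V-W\|_{C^{k}}$. I plan to manufacture the required ergodic $W$-invariant tori by applying the KAM Theorem~\ref{t1} on each domain $\Om_j$ of canonical integrability of $V$; the perturbation $W|_{\Om_j}$ inherits exactness from $W\in\SVect^{k}_{ex}(M)$, so the key hypothesis of that theorem (cf.~Remark~\ref{R:exactP}) is satisfied automatically.

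For the continuity part, fix $\eta>0$. Since $\bigcup_j \Om_j$ exhausts $M$ up to a null set by Definition~\ref{def}(2), I choose finitely many $j_1,\dots,j_N$ and compact sub-cylinders $\tilde\Om_i = \T^2\times[-\gamma_i',\gamma_i']\Subset \Om_{j_i}$ with $\mathrm{meas}\bigl(\bigcup_i \tilde\Om_i\bigr) > 1-\eta/2$. On each $\tilde\Om_i$ the $C^{k}$-norm of $V$ and the reciprocal of the twist modulus $\tau_i$ of~\eqref{2} are bounded. Applying Theorem~\ref{t1} to the pair $(V,W)|_{\Om_{j_i}}$ for $\e$ below the threshold $\e_0 := \min_i \e_0(V|_{\Om_{j_i}})$ produces a volume-preserving $C^1$-diffeomorphism $\Psi_i$ and a Borel subset $\bar Q_i\subset[-\gamma_i',\gamma_i']$ whose complement has measure at most $C_i\tau_i^{-1}\sqrt\e$. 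For every $\bar z\in \bar Q_i$, the image $\Psi_i^{-1}(\T^2\times\{\bar z\})$ is an ergodic invariant torus of $W$, since after conjugation the flow is linear with irrational slope $\bar f(\bar z)/\bar g(\bar z)$. Summing the contributions and using that $\Psi_i$ is volume-preserving,
\[
\vk(W) \;\ge\; 1 - \tfrac{\eta}{2} - (2\pi)^2 \sum_{i=1}^N C_i\tau_i^{-1}\sqrt\e \;>\; 1-\eta
\]
for $\e$ sufficiently small, which proves continuity.

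For the Hölder bound, the well-integrability of $V$ fixes the index set at $\{1,\dots,N\}$, but the trimming parameter that was chosen \emph{after} $\eta$ above must now be coupled to $\e$. The plan is to pick a single scale $\delta>0$, truncate each $\Om_j$ to the compact subset $\tilde\Om_j(\delta)$ of points at distance $\ge\delta$ from $\partial\Om_j$ and at which the twist modulus satisfies $\tau_j(\cdot)\ge \delta$, and then run the preceding argument on $\bigcup_j \tilde\Om_j(\delta)$. Analyticity of $V$ (and hence of the functions $f_j,g_j$ and of the boundaries $\partial\Om_j$) allows one to invoke a Łojasiewicz-type inequality guaranteeing that $\mathrm{meas}\bigl(M\setminus\bigcup_j\tilde\Om_j(\delta)\bigr)\le C_V\delta^\alpha$ for some analytic exponent $\alpha>0$. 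Combining with the KAM estimate this yields
\[
\vk(W) \;\ge\; 1 - C_V\delta^\alpha - C_V\delta^{-1}\sqrt\e,
\]
and optimizing by setting $\delta = \e^{1/(2\alpha+2)}$ produces~\eqref{Hol} with exponent $\theta = \alpha/(2\alpha+2)$.

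The step I expect to be the main obstacle is precisely this Łojasiewicz input in the Hölder case: the KAM machinery is already in quantitative form in Theorem~\ref{t1}, but one must control, uniformly in a single scale $\delta$, how rapidly the constants $\e_0(V|_{\tilde\Om})$, $C$, and the twist modulus $\tau$ degenerate as the truncation approaches the singular locus $G$ and the boundaries $\partial\Om_j$, and then package these into one power law via analyticity of $V$. The $C^{k}$ continuity statement sidesteps this issue, because there the trimming depth $\gamma_i-\gamma_i'$ is chosen \emph{before} the perturbation size is quantified, whereas in the Hölder case the two small parameters $\delta$ and $\e$ must be balanced against each other.
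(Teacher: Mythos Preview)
Your proposal is correct and follows essentially the same approach as the paper: truncate each $\Om_j$ to a subset where the twist is uniformly bounded below by $\delta$, apply the KAM Theorem~\ref{t1} there to harvest ergodic $W$-invariant tori, and then either let $\delta\to 0$ after $\eta$ is fixed (continuity) or couple $\delta$ to $\e$ and invoke a \L ojasiewicz-type estimate on the analytic twist function to bound $\mathrm{meas}\bigl(\Om_j\setminus\tilde\Om_j(\delta)\bigr)$ by a power of $\delta$ (H\"older case). The only cosmetic difference is that the paper couples $\delta=\e^{1/2-\theta_1}$ already in the continuity argument and sums over all $j$, whereas you first pass to finitely many domains; your variant is slightly cleaner when the family $\{\Om_j\}$ is infinite.
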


\begin{proof}
Let $V$ be an integrable nondegenerate vector field and 
$W\in\SVect^{k}_{ex}(M)$ an exact divergence-free vectorfield that is close to $V$. By definition we have that $\vk(V)=1$. Consider a domain $\Om_j$ as in Definition~\ref{def} and set $\epsilon:=\|V-W\|_{C^{k}}$. We now take a subset $\tilde\Om_j(\delta)\subset\Om_j$ for any small $\delta>0$ such that:
\begin{itemize}
\item $\text{meas}\,(\Om_j\backslash \tilde\Om_j(\delta))\to 0$ as $\delta\to 0$.
\item $V$ is canonically integrable in $\tilde\Om_j(\delta)$.
\item $V$ satisfies the uniform twist condition $2$ of Section~\ref{s2}, cf. the inequality~\eqref{2}, with $\tau=\delta$, i.e.
\begin{equation}\label{equni}
|f'(z)g(z)-f(z)g'(z)|\geq \delta>0 \,.
\end{equation}
\end{itemize}
Then the KAM theorem~\ref{t1} implies that the contribution $\vkj(W)$ 
 to $\vk(W)$, coming from a set $\tilde\Om_j(\delta)$, is 
\begin{equation}\label{eqkam1}
\vkj(W)\ge \text{meas}\,(\tilde\Om_j(\delta))- C_j\delta^{-1}\|V-W\|^{1/2}_{C^{k}}\,,
\end{equation}
where $C_j$ is a $\delta$-independent 
 positive constant.  In order to get an estimate for $\vk(W)$ 
that just depends on $\epsilon$, we relate the small parameters $\delta$ and  $\epsilon$ by 
choosing $\delta=\epsilon^{1/2-\theta_1}$ for some  $\theta_1\in(0,1/2)$. 
Accordingly, denoting by $\Omega_j^\epsilon$ the set $\tilde\Om_j(\epsilon^{1/2-\theta_1})$ for each $j$, we get from Eq.~\eqref{eqkam1} that 
\begin{equation}\label{eqint}
\vkj(W)\ge\big( \text{meas}\,(\Om_j)-\text{meas}\,(\Om_j\backslash\Omega_j^\epsilon)-C_j\epsilon^{\theta_1}\big)
\longrightarrow\text{meas}\, (\Om_j)\quad\text{as} \quad \epsilon\to0\,. 
\end{equation}
Since  $\vk(W) \ge  \sum_j\vkj(W)$ and 
$\sum_j \text{meas}\,(\Om_j)=1$, then summing  relations \eqref{eqint} in $j$ 
we conclude that
$$
1\ge\vk(W)\longrightarrow 1\qquad \text{as}\quad \epsilon\longrightarrow 0. 
$$
That is,  $\vk$ is continuous at $V$. 

If $V$ is an analytic vectorfield, then  the measure of the set $\Om_j\backslash\Omega_j^\epsilon$ where Eq.~\eqref{equni} fails, satisfies 
\begin{equation}\label{eqkam2}
\text{meas}\,(\Om_j\backslash\Omega_j^\epsilon)\leq C_j\epsilon^{\theta_2}\,
\end{equation}
for some $\theta_2>0$, on account of Lojasiewicz's vanishing theorem~\cite[Section~6.3]{KP}, which we apply to the analytic function 
 in the LHS of Eq.~\eqref{equni}. Therefore, Eq.~\eqref{eqint} and~\eqref{eqkam2} imply that
\begin{equation}\label{eqkam3}
\vkj(W)\ge \text{meas}\,(\Om_j)-C_j\epsilon^{\theta_3}\,.
\end{equation}
Finally, summing up for finitely many $j$ in Eq.~\eqref{eqkam3} (we recall that now  $V$ is well integrable), we conclude that
$$
1\ge  \vk(W)\ge  1-C_V\epsilon^\theta     \,,
$$
thus proving Eq.~\eqref{Hol}. 
\end{proof}

\begin{corollary}\label{csigma}
If $M$ is analytic and   $V\in\SVect^{k}_{ex}(M)$ is analytic on the closure $\overline \Om_j$ 
of some domain $\Om_j$,  where $V$ is
canonically integrable and nondegenerate, then 
$$
\vk(W)\ge \text{meas}\,(\Om_j) -C_V \|V-W\|^{\theta}_{C^{k}} \qquad
\forall\,W\in     \SVect^{k}_{ex}\,,
$$
for some $\theta>0$ and a positive constant $C_V$. If the manifold $M$ is smooth and the vectorfield $V$ is not assumed to be analytic in $\overline \Om_j$, then
 $$\vk(W)\ge \text{meas}\,(\Om_j) 
  - E( \|V-W\|_{C^{k}} )\,,$$
where $E(t)$ is a continuous function satisfying $\lim_{t\to 0}E(t)=0$.
\end{corollary}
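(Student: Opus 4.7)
The plan is to recognize this corollary as a localized version of Theorem~\ref{t2}: rather than summing the KAM contributions $\vkj(W)$ over a full family of domains covering $M$, I would keep the single bound coming from the one domain $\Om_j$ on which $V$ is canonically integrable and nondegenerate. Since the functional $\vk$ is defined via the inner measure of a disjoint union of ergodic tori, one always has $\vk(W)\ge\vkj(W)$, so it suffices to prove the desired lower bound for $\vkj(W)$ alone.

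For the analytic case I would repeat the KAM step in the proof of Theorem~\ref{t2} verbatim, restricted to $\Om_j$. For each $\delta>0$ introduce the subset $\tilde\Om_j(\delta)\subset\Om_j$ on which the twist Wronskian satisfies $|f'g-fg'|\ge\delta$, and apply Theorem~\ref{t1} with $\tau=\delta$ to obtain
\begin{equation*}
\vkj(W)\ge\text{meas}(\tilde\Om_j(\delta))-C_j\delta^{-1}\epsilon^{1/2},
\qquad \epsilon:=\|V-W\|_{C^k}.
\end{equation*}
Because $V$ is analytic on the closure $\overline{\Om_j}$, Lojasiewicz's vanishing theorem yields $\text{meas}(\Om_j\setminus\tilde\Om_j(\delta))\le C_j\delta^{\theta_2}$ for some $\theta_2>0$, exactly as in the proof of Theorem~\ref{t2}. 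Balancing the two error terms by the choice $\delta=\epsilon^{1/(2(\theta_2+1))}$ gives the advertised Hölder bound with exponent $\theta=\theta_2/(2(\theta_2+1))>0$.

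For the general smooth case the Lojasiewicz estimate is unavailable, so I would replace the power-law bound by the soft fact that $\rho(\delta):=\text{meas}(\Om_j\setminus\tilde\Om_j(\delta))$ tends to $0$ as $\delta\to 0$ (which follows from the nondegeneracy of $V$ on $\Om_j$ together with dominated convergence on the set where the twist Wronskian is nonzero). Then the KAM bound takes the form $\vkj(W)\ge\text{meas}(\Om_j)-\rho(\delta)-C_j\delta^{-1}\epsilon^{1/2}$, and I would define
\begin{equation*}
E(t):=\inf_{\delta>0}\bigl(\rho(\delta)+C_j\delta^{-1}t^{1/2}\bigr),
\end{equation*}
which is monotone, continuous, and satisfies $E(t)\to 0$ as $t\to 0$ (choose $\delta=\delta(t)\to 0$ slowly enough that $\delta^{-1}t^{1/2}\to 0$). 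This yields the second inequality of the corollary.

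The only nontrivial step is the Lojasiewicz-type decay of $\rho(\delta)$ in the analytic case; everything else is a mechanical transcription of the proof of Theorem~\ref{t2} to a single domain. Here the critical point is that the twist Wronskian $f'g-fg'$ is an analytic function that does not vanish identically on $\Om_j$ (it is positive on a set of positive measure by the nondegeneracy assumption), and analyticity up to the closure $\overline{\Om_j}$ lets the estimate hold uniformly. Once this is in hand, summing (or not summing, in our localized setting) the estimates reduces the corollary to the balancing arguments already exhibited above.
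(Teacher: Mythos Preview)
Your proposal is correct and follows essentially the same approach as the paper. The corollary is stated in the paper without a separate proof, precisely because it is the single-domain version of the argument for Theorem~\ref{t2}: one keeps the KAM estimate $\vkj(W)\ge\text{meas}(\tilde\Om_j(\delta))-C_j\delta^{-1}\epsilon^{1/2}$, invokes Lojasiewicz in the analytic case to control $\text{meas}(\Om_j\setminus\tilde\Om_j(\delta))$, and in the smooth case uses only that this measure tends to zero with $\delta$. Your explicit balancing $\delta=\epsilon^{1/(2(\theta_2+1))}$ and your definition of $E(t)$ as an infimum over $\delta$ are slightly more explicit than the paper's choice $\delta=\epsilon^{1/2-\theta_1}$, but the content is identical.
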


We finish this section by observing that on any closed $3$-manifold $M$ there is a nontrivial vector field $W\in \SVect^{k}_{ex}(M)$ such that $\vk(W)=0$. Indeed, take a couple of functions $(f_1,f_2):M\to\R^2$ which are independent almost everywhere in $M$, i.e. $\text{rank}(df_1(x),df_2(x))=2$ for all $x\in M$ except for a zero-measure set. Of course such a couple exists on any analytic $M$, e.g. two generic $C^\omega$ functions. Now we define $W$ as the unique vectorfield such that:
$$
i_W\mu=df_1\wedge df_2\,.
$$
It is obvious that $W$ is divergence-free and exact because $i_W\mu=d(f_1df_2)$, and that $f_1,f_2$ are first integrals of $W$. It then follows from Proposition~\ref{p}, item $4$, that $\vk(W)=0$.  

In contrast, the following seemingly obvious fact is still an open problem: to prove that there are open domains in $ \SVect^{k}_{ex}(M)$ where $\vk<1/2$.
To bypass this difficulty, in the next section we shall define and study a version of the partial integrability functional $\vk$, where the 
isotopy type of the invariant tori is fixed.

\bigskip


\section{Isotopy classes of invariant tori}   \label{isotopy}
In the previous section we have introduced the partial integrability functional $\vk$ which gives the measure of ergodic invariant tori, without distinguishing between different isotopy classes. Now we are going to exploit the different ways an invariant torus can be embedded in $M$ and define a countable number of such quantities.
We recall that two embedded tori $T^2_0$ and $T^2_1$ are \emph{isotopic} if there exists a family of embedded tori $T^2_t$, $t\in[0,1]$, connecting $T^2_0$ and $T^2_1$. It is well known that this property is equivalent to the existence of an isotopy $\Theta_t:M\times[0,1]\to M$ such that $\Theta_0=id$ and $\Theta_1(T^2_0)=T^2_1$~\cite{Hae61}. This equivalence relation defines the set of isotopy classes of embedded tori in $M$, we denote this set by $\cI(M)$. 
It is well known that the set of isotopy classes $\cI(M)$ is countable. 

The invariant tori in each domain $\Om_j$ introduced in Definition~\ref{def} have the same \emph{isotopy class}, but 
the class 
 can vary for different values of $j$. In this section, to be more precise, for each isotopy class $a\in\cI(M)$ we consider the sets $\Om_j^a$ which are the domains $\Om_j$ whose invariant tori are in the isotopy class $a$. Accordingly, now one can define a \emph{family of functionals} $\vk_a$, $a\in\cI(M)$, on the space of  exact divergence-free vectorfields.

\begin{definition}\label{defka} 
Given an isotopy class $a\in \cI(M)$, the \emph{partial integrability functional}
$$
\vk_a:\SVect^1_{ex}(M)\to [0,1]
$$
assigns to an exact   $C^1$-smooth divergence-free vectorfield $V$  the inner
measure of the set of ergodic 
$V$-invariant two-dimensional $C^1$-tori lying in the isotopy class $a$. The sequence 
 $$
  \cI(M) \ni a \mapsto \vk_a(V)
 $$
 is called the {\it integrability spectrum of $V$}. 
\end{definition}

 By definition one has  
 \begin{equation}\label{bydef}
  \vk(V)\geq \sum_a \vk_a(V)\,
   \end{equation}
 for the quantity $\vk(V)$ introduced before, since the inner measure is superadditive.

\begin{remark}\label{Rsame}
It is easy to check that items 1 and 3--6 of Proposition~\ref{p}, and Corollary~\ref{csigma} hold true if we substitute $\vk$ by $\vk_a$ and $\Om_j$ by $\Om_j^a$. Moreover, Theorem~\ref{t2} also holds for $\vk_a$, where the estimate~\eqref{Hol} now takes the form
$$
|\vk_a(W)-\vk_a(V)|\leq C_V\|V-W\|^{\theta}_{C^{k}}\qquad
\forall\,W\in     \SVect^{k}_{ex}(M)\,.
$$
\end{remark}

Remark~\ref{Rsame} and \eqref{bydef} 
imply  that the integrability spectrum is continuous at points $V$ which are
integrable nondegenerate vectorfields (here $k>3$):

\begin{proposition}\label{prop1}
Let $V\in \SVect^{k}_{ex}(M)$ be an integrable nondegenerate vectorfield. Then for each $a\in \cI(M)$ the function $V\mapsto \vk_a(V)$
is continuous at $V$. 
\end{proposition}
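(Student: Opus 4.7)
The plan is to prove two-sided continuity by combining the KAM-based lower bound on each domain with the continuity of the total functional $\vk$. Fix $a\in\cI(M)$. Since $V$ is integrable and nondegenerate, the domains $\{\Om_j\}$ of Definition~\ref{def} cover $M$ up to a set of measure zero, each $\Om_j$ carries a well-defined isotopy class $a_j\in\cI(M)$ of invariant tori, and the twist condition~\eqref{2} forces almost every torus in $\Om_j$ to be ergodic. Consequently
$$\vk_a(V)=\sum_{j:\,a_j=a}\text{meas}\,(\Om_j),\qquad \vk(V)=\sum_{a'\in\cI(M)}\vk_{a'}(V)=1.$$
These identities are the starting point: in particular, equality holds in \eqref{bydef} at $V$, and the discrepancy at a perturbation $W$ will be controlled by KAM.

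For the lower bound $\liminf_{W\to V}\vk_a(W)\ge\vk_a(V)$, I would localize the KAM estimate from the proof of Theorem~\ref{t2} in its $\vk_a$-version (Remark~\ref{Rsame}) to each domain. Writing $\vk_a^j(W)$ for the inner measure of those ergodic $W$-invariant tori of class $a$ that lie in $\Om_j$, Corollary~\ref{csigma} adapted to $\vk_a$ gives, for every $j$ with $a_j=a$,
$$\vk_a^j(W)\ge \text{meas}\,(\Om_j)-E_j(\|V-W\|_{C^{k}}),$$
where $E_j$ is a modulus of continuity vanishing at $0$. Because the $\Om_j$ are disjoint, superadditivity of the inner measure yields, for any finite $F\subset\{j:a_j=a\}$,
$$\vk_a(W)\ge \sum_{j\in F}\vk_a^j(W)\ge \sum_{j\in F}\text{meas}\,(\Om_j)-\sum_{j\in F}E_j(\|V-W\|_{C^{k}}).$$
Given $\varepsilon>0$, I would choose $F$ large enough that $\sum_{j\in F}\text{meas}\,(\Om_j)\ge \vk_a(V)-\varepsilon$, let $W\to V$ so that the finite sum of $E_j$'s vanishes, and then let $\varepsilon\to 0$. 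Truncation to a finite $F$ is essential because $\{j:a_j=a\}$ may be infinite and no uniform modulus is available in Corollary~\ref{csigma}.

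For the upper bound $\limsup_{W\to V}\vk_a(W)\le\vk_a(V)$, I would apply the lower bound just established at every other isotopy class $a'\ne a$ and invoke Fatou's lemma for counting measure on $\cI(M)\setminus\{a\}$:
$$\liminf_{W\to V}\sum_{a'\ne a}\vk_{a'}(W)\ge\sum_{a'\ne a}\liminf_{W\to V}\vk_{a'}(W)\ge\sum_{a'\ne a}\vk_{a'}(V)=1-\vk_a(V).$$
Combining this with the superadditivity inequality $\vk_a(W)\le\vk(W)-\sum_{a'\ne a}\vk_{a'}(W)$ (from~\eqref{bydef}) and with $\vk(W)\to 1$ supplied by Theorem~\ref{t2} produces $\limsup_{W\to V}\vk_a(W)\le\vk_a(V)$, closing the argument.

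The main obstacle I anticipate is the bookkeeping with the countable collection of domains and isotopy classes: one must make sure that truncation to a finite set of KAM estimates is harmless in the lower bound, and that the possible ``leakage'' of mass to classes $a'\ne a$ under a $C^{k}$-small perturbation is fully captured by the pointwise control of each $\vk_{a'}(W)$ from below together with the control of the total $\vk(W)$ from above. Both issues are handled by the finite-$F$ approximation in the first step and by Fatou on counting measure in the second, leveraging the fact that lower semicontinuity propagates in the correct direction both on and off the chosen class $a$.
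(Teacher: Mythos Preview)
Your proof is correct and follows essentially the same approach as the paper: the paper merely states that Proposition~\ref{prop1} follows from Remark~\ref{Rsame} (the $\vk_a$-version of Theorem~\ref{t2}, which gives the lower bound) together with the inequality~\eqref{bydef} (which, combined with the lower bounds for all $a'\neq a$ and $\vk(W)\to 1$, gives the upper bound). You have simply filled in the details the paper omits, in particular the finite-$F$ truncation for the lower bound and the Fatou argument for the upper bound over the countable index set $\cI(M)$.
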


In the following lemma we introduce a subset $\cI_0(M)$ consisting of tori, lying in a fixed $3$-ball,
embedded in $M$. 
This subset of $\cI(M)$ is key to prove Theorem~\ref{Tconst} below. Roughly speaking the lemma shows that knotted tori in a ball cannot be unknotted in $M$.

\begin{lemma}\label{L:Tiso}
Let $M$ be a closed $3$-manifold and $B\subset M$ a $3$-ball. Two-tori embedded in $B$ whose core knots are neither isotopic nor mirror images in $B$, are not isotopic in $M$. In particular, the subset $\cI_0(M)\subset\cI(M)$ of isotopy classes of such tori is isomorphic to $\Z$. 
\end{lemma}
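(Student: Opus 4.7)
The plan is to show that, for a torus $T\subset B$ with nontrivial core knot, the isotopy class of this core in $B\cong S^3$ modulo mirror image is an invariant of the ambient isotopy class $[T]\in\cI(M)$. Granting such an invariant, the assignment $T\mapsto [K]$ embeds $\cI_0(M)$ into the countable set of knot-isotopy classes in $S^3$ modulo mirror image, and to obtain $\cI_0(M)\cong\Z$ it then suffices to exhibit countably many tori in $B$ whose core knots are pairwise non-isotopic and non-mirror-image. Such a $\Z$-family is for instance given by the tubular-neighborhood boundaries of the torus knots $T(2n+1,2)$ for $n\geq 1$ together with their mirror images, which are pairwise distinct prime chiral knots in $S^3$.

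Let $\Phi_t$ be an ambient isotopy of $M$ with $\Phi_1(T_0)=T_1$, and write $N_i\subset B$ for the closed solid torus bounded by $T_i$, with nontrivial core knot $K_i$, and $X_i:=M\setminus\text{int}(N_i)$ for its complementary manifold. The first step is to check that $\Phi_1(N_0)=N_1$. The image $\Phi_1(N_0)$ is a closed solid torus with boundary $T_1$, hence equal either to $N_1$ or to $X_1$. But applying Van Kampen along the simply-connected sphere $\partial B$,
\[
\pi_1(X_i)=\pi_1\bigl(B\setminus\text{int}(N_i)\bigr)*\pi_1\bigl(M\setminus\text{int}(B)\bigr)=G(K_i)*\pi_1(M),
\]
where $G(K_i)$ denotes the knot group of $K_i$ in $S^3$. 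A free product is infinite cyclic only if one factor is trivial and the other is $\Z$, and $G(K_i)\neq \Z$ for any nontrivial $K_i$; hence $X_1$ is not a solid torus, so $\Phi_1(N_0)=N_1$. Consequently $X_0\cong X_1$ as oriented compact 3-manifolds with toral boundary.

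To read off the knot type from $X_i$, I would use the decomposition
\[
X_i=\bigl(B\setminus\text{int}(N_i)\bigr)\cup_{\partial B}\bigl(M\setminus\text{int}(B)\bigr),
\]
and identify $B\setminus\text{int}(N_i)$ with the knot exterior $E_{K_i}:=S^3\setminus\text{int}(N_i)$ minus an open 3-ball. This presents $X_i$ as the connect sum $E_{K_i}\#M$ performed along the sphere $\partial B$, in which $E_{K_i}$ is the unique summand containing the toral boundary $T$. The exterior of a nontrivial prime knot is irreducible and hence prime as a 3-manifold, so the Kneser--Milnor uniqueness of the prime decomposition (in its version for compact orientable 3-manifolds with boundary) applied to the homeomorphism $X_0\cong X_1$ forces $E_{K_0}\cong E_{K_1}$. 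The Gordon--Luecke theorem, asserting that knots in $S^3$ are determined by their complements up to mirror image, then completes the argument.

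The main obstacle is the careful handling of the Kneser--Milnor decomposition in the boundary setting, and in particular the identification of $E_K$ as the unique prime summand of $X$ carrying $T$. For non-prime core knots this argument would need to be supplemented by the JSJ/torus decomposition, but restricting the explicit $\Z$-family above to prime cores neatly circumvents the difficulty.
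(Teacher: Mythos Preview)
Your argument follows the same route as the paper's: from an isotopy $T_0\simeq T_1$ deduce a diffeomorphism $M\setminus\overline{N_0}\cong M\setminus\overline{N_1}$, view this as $E_{K_0}\#M\cong E_{K_1}\#M$, apply Kneser--Milnor uniqueness to extract $E_{K_0}\cong E_{K_1}$, and finish with Gordon--Luecke. The only real difference is in handling the possibility $\Phi_1(N_0)=X_1$: the paper treats that case directly by composing with a solid-torus diffeomorphism $N_1\to N_0$, whereas you exclude it via $\pi_1(X_1)=G(K_1)*\pi_1(M)\neq\Z$, which is why you need $K_1$ nontrivial. One correction: your caveat about composite cores is unnecessary, since every knot exterior in $S^3$ is irreducible (any embedded $2$-sphere in $S^3$ bounds a ball by Alexander, and the side missing the knot lies in $E_K$), hence a prime $3$-manifold regardless of whether the knot itself is prime---so the Kneser--Milnor step already goes through for arbitrary core knots.
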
   
\begin{proof}
Let us take two knots $L_1,L_2$ contained in $B\subset M$. Recall that a knot is a smoothly embedded circle. For each $i\in\{1,2\}$, define a torus $T_i^2\subset B\subset M$, which is the boundary of a tubular neighborhood $N(L_i)$ of the knot $L_i\subset B$, i.e. $T_i^2=\partial N(L_i)$. Let us prove that if $T_1^2$ and $T_2^2$ are isotopic in $M$, then the knots $L_1$ and $L_2$ are isotopic in $B$ or they are mirror images of each other. First, $T_1^2$ and $T_2^2$ being isotopic it follows that there is a diffeomorphism $\Theta':M\backslash T_1^2\to M\backslash T_2^2$, and we claim that this implies that $M\backslash \overline{N(L_1)}$ is diffeomorphic to $M\backslash \overline{N(L_2)}$. Indeed, the manifold $M\backslash T_i^2$ consists of two connected components, that is $N(L_i)$ and $M\backslash \overline {N(L_i)}$, so the existence of $\Theta'$ implies that:
\begin{itemize}
\item either there is a diffeomorphism $\Theta:M\backslash \overline{N(L_1)}\to M\backslash \overline{N(L_2)}$, as desired,
\item or diffeomorphisms $\Theta_1:M\backslash \overline{N(L_1)}\to N(L_{2})$ and 
$\Theta_2:M\backslash \overline{N(L_2)}\to N(L_{1})$.
\end{itemize}
In the second case, since there is a diffeomorphism $\Phi':N(L_2)\to N(L_1)$ because both sets are solid tori, we conclude that the diffeomorphism $\Theta_2^{-1}\circ \Theta'\circ \Theta_1$ transforms $M\backslash \overline{N(L_1)}$ onto $M\backslash \overline{N(L_2)}$, as we wanted to prove.

Accordingly, $M\backslash \overline{N(L_1)}$ is diffeomorphic to $M\backslash \overline{N(L_2)}$, so performing the connected sum prime decomposition~\cite{Ha07} of $M\backslash \overline{N(L_i)}$, its uniqueness readily implies that there exists a diffeomorphism $\Phi:B\backslash \overline{N(L_1)} \to B\backslash \overline{N(L_2)}$. It is easy to see that $B\backslash \overline{N(L_i)}$ is diffeomorphic to $B\backslash L_i$, so we get that $B\backslash L_1$ is diffeomorphic to $B\backslash L_2$. We conclude from Gordon-Luecke's theorem~\cite{GL89} that either $L_1$ and $L_2$ are isotopic in $B$ provided that  $\Phi$ is orientation-preserving, or they are mirror images of each other otherwise.

The previous discussion implies that different isotopy classes of knots in $B$ that are not mirror images, define different isotopy classes of embedded tori in $M\supset B$, where the tori are just the boundaries of tubular neighborhoods of the knots. Therefore, one can define each element in $\cI_0(M)$ as the set of embedded tori in a $3$-ball whose core knots are either isotopic or mirror images. It is standard that the set $\cI_0(M)$ is isomorphic to $\Z$, see e.g.~\cite{Ro03}.    
\end{proof}

\begin{remark}
In general, the set of isotopy classes $\cI(M)$ is bigger than $\cI_0(M)$, e.g. an embedded torus $T^2$ can be homologically nontrivial in $M$, that is $0\neq [T^2]\in H_2(M;\Z)$, so different homology classes give rise to different isotopy classes. Of course $\cI(M)=\cI_0(M)$ if e.g. $M=\mathbb S^3$. 
\end{remark}

While the index $a\in\cI(M)$ takes values in the set of all isotopy classes of embedded tori in $M$, for our purposes often 
 it suffices to assume that $a$ takes values in the subset $\cI_0(M)$ of embedded tori in a $3$-ball of $M$, i.e. $a\in\cI_0(M)\cong\Z$, cf. Lemma~\ref{L:Tiso}.
The existence of ``many'' invariant tori of a vectorfield, taking into account their isotopy classes, will be exploited in our analysis of the Euler equation below. 

\begin{example}\label{ex3}
\rm{
In Example~\ref{extorus} of an integrable vectorfield on $\T^3$, all invariant tori
of the field $\rot u^z$  are defined by $\{z=c\}$, and hence they are in the same isotopy class, call it $a_0\in\cI(\T^3)$. Therefore, $\vk_{a_0}(\rot u^z)=1$ and $\vk_{a}(\rot u^z)=0$ for all $a\neq a_0$. As we discussed in Example~\ref{extorus},  the isotopy class $a_0$ is nontrivial, and in fact it does not belong to $\cI_0(\T^3)$, cf. Lemma~\ref{L:Tiso}.
}
\end{example}

We finish this section by constructing an exact divergence-free vectorfield $V_a$, for any $a\in\Z$, on a closed $3$-manifold $M$ which is canonically integrable and nondegenerate on a domain $\Om_1^a\subset M$ whose measure can be made arbitrarily  close to $1$. Moreover, the $V_a$-invariant tori in $\Om_1^a$ correspond to the isotopy class $a\in\cI_0(M)$. 

\begin{theorem}\label{Tconst}
Let $M$ be closed $3$-manifold with volume form $\mu$. Then, for any $a\in\cI_0(M)\cong\Z$ and $0<\delta<1$ there exists a partially integrable vectorfield $V_a\in \SVect^{\infty}_{ex}(M)$ such that:
\begin{itemize}
\item It admits a $V_a$-invariant domain $\Om_1^a$ with meas$\,(\Om_1^a)=1-\delta$.
\item $V_a|_{\Om_1^a}$ is canonically integrable and nondegenerate, and so $\vk_a(V_a)\geq 1-\delta$.
\item $V_a|_{\overline\Om_1^a}$ can be taken analytic if $M$ and $\mu$ are analytic. 
\end{itemize}
\end{theorem}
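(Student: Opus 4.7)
Construct $V_a$ in three steps.

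\emph{Step 1 (Geometric shell of class $a$).} By the definition of $\cI_0(M)\cong\Z$, pick a knot $K_0$ of class $a$ in a small ball $B_0\subset M$. Isotope $K_0$ within $M$ (keeping its class $a$) to a knot $K$ that threads densely through $M$: cover $M$ by finitely many small balls of total measure $\leq \delta/3$ and realize $K$ as a long smooth curve visiting each ball in succession. A thin tubular neighborhood $T\supset K$ then has $\text{meas}(T)\geq 1-\delta/2$. Pick a narrower concentric tubular neighborhood $T'\subset T$ of $K$ of measure $\leq \delta/4$ and set $\Om_1^a := \text{int}(T)\setminus\overline{T'}$; this is diffeomorphic to $\T^2\times(-\gamma,\gamma)$. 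Its two boundary tori are parallel copies of a standard tubular torus of $K_0$, so they are of class $a$. By adjusting $T$ and $T'$, arrange $\text{meas}(\Om_1^a)=1-\delta$.

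\emph{Step 2 (Local model and global extension).} Enlarge $\Om_1^a$ to a slightly bigger open collar $\Om\cong \T^2\times(-\gamma-\epsilon,\gamma+\epsilon)$ in $M$. By Moser's theorem pick coordinates $(x,y,z)$ on $\Om$ such that $\mu=c\,dx\wedge dy\wedge dz$ ($c>0$ a constant) and $\Om_1^a=\T^2\times(-\gamma,\gamma)$. Define $V_a|_\Om := f(z)\partial_x + g(z)\partial_y$, extended by zero outside $\Om$, where $f,g$ are smooth on $(-\gamma-\epsilon,\gamma+\epsilon)$, satisfy $f(z)\equiv 1,\ g(z)\equiv z$ on $[-\gamma,\gamma]$, vanish with all derivatives at $z=\pm(\gamma+\epsilon)$, and moreover are chosen so that the primitives $F(z):=-\int_0^z f\,ds$ and $G(z):=-\int_0^z g\,ds$ \emph{also} vanish with all derivatives at $z=\pm(\gamma+\epsilon)$. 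This last requirement amounts to the integral constraints $\int_\gamma^{\gamma+\epsilon}f=-\gamma,\ \int_\gamma^{\gamma+\epsilon}g=-\gamma^2/2$ with analogues on the other side, easily satisfied by adding bumps of prescribed integrals to any standard smooth decay profile. The resulting $V_a$ is $C^\infty$ on $M$ and manifestly divergence-free in the chosen coordinates; exactness holds because $\beta:=F(z)\,dy-G(z)\,dx$ satisfies $d\beta=i_{V_a}\mu$ on $\Om$ and, by the extra vanishing of $F,G$, extends smoothly by zero to a global 1-form on $M$ with $d\beta=i_{V_a}\mu$ everywhere. Hence $V_a\in\SVect^{\infty}_{ex}(M)$.

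\emph{Step 3 (Verification and analytic case).} On $\Om_1^a$ one has $V_a=\partial_x+z\,\partial_y$, giving $f^2+g^2=1+z^2>0$ and $|f'g-fg'|=1$, so conditions~1 and~2 of Section~\ref{s2} hold uniformly with $\tau=1$; thus $V_a|_{\Om_1^a}$ is canonically integrable and nondegenerate. The invariant tori $\T^2\times\{z\}\subset\Om_1^a$ are of class $a$ by Step~1, and each is ergodic whenever $z$ is irrational, so $\vk_a(V_a)\geq \text{meas}(\Om_1^a)=1-\delta$. If $M$ and $\mu$ are analytic, the knot $K$ and the Moser change of coordinates can be chosen analytic on a neighborhood of $\overline{\Om_1^a}$; since $f=1,\ g=z$ are polynomials, $V_a|_{\overline{\Om_1^a}}$ is then analytic (no analyticity is asserted on the collars outside).

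\textbf{Main obstacle.} The main geometric difficulty lies in Step~1, namely realizing an arbitrary class $a\in\cI_0(M)$ by a knot whose tubular neighborhood has measure arbitrarily close to $1$. I would establish this by a standard covering-and-connecting construction exploiting the flexibility of isotopy classes of knots in 3-manifolds. The remaining steps reduce to Moser's theorem and elementary one-variable bump constructions.
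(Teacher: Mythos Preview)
Your proof has the same three-step architecture as the paper's, but implements the two nontrivial ingredients differently. For Step~1 you isotope the knot so that its tube fills most of $M$; the paper instead embeds one large ball $B\subset M$ with $\text{meas}(M\setminus B)=\delta/4$ and works entirely inside $B$, inflating the solid torus around the knot by gluing on a large sub-ball of $B$ via a thin tube. Your phrasing here is garbled (you cannot cover $M$, of measure~$1$, by balls of total measure $\leq\delta/3$), and you rightly flag this step as the main obstacle; the paper's big-ball trick sidesteps it. For exactness you build an explicit global primitive $\beta=F\,dy-G\,dx$ by forcing $F,G$ to vanish to infinite order at the outer edge via integral constraints on $f,g$ (note this forces $f$ negative in the collar, which is harmless). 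The paper instead uses that $\supp V_a$ lies in the contractible ball $B$: any closed $1$-form on $M$ equals $dR$ on $B$, so $\int_M h\wedge i_{V_a}\mu=\int_{\partial\Omega_a}R\,i_{V_a}\mu=0$ by Stokes, with no constraints on $f,g$ and no smooth-extension-by-zero of a $1$-form needed. The two simplifications in the paper are linked: the big ball is precisely what makes the cohomological exactness argument available. (A minor point: the paper straightens $\mu$ on the shell by the elementary change $z\mapsto\int p(x,y,s)\,ds$ rather than by invoking Moser, which avoids boundary subtleties.) Your route is correct once Step~1 is made precise; the paper's is more economical because one geometric device handles both difficulties at once.
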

\begin{proof}
We divide the construction of the vectorfield $V_a$ in three steps:
\\

\noindent {\bf Step 1}: Let $L_a$ be a knot in a $3$-ball $B$. Take a solid torus $\Omega_a\subset B$ which is a tubular neighborhood of the knot $L_a$, and hence diffeomorphic to $\es^1\times (0,1)^2$. We assume that for different $a\in\Z$, the knots $L_a$ are neither isotopic nor mirror images in $B$. It is easy to see that for any $0<\delta<1$, one can smoothly glue a big ball $\tilde B$ to $\Omega_a$ to get a new domain, which we still denote by $\Omega_a$, which is a solid torus isotopic to the original one, $L_a$ being at its core, and
\begin{equation}\label{eqball}
\text{meas}\,(B\backslash \Omega_a)= \frac{\delta}{4}\,.  
\end{equation}
Next, we embed the $3$-ball $B$ in $M$ in such a way that Eq.~\eqref{eqball} holds (with respect to the volume form $\mu$) and
\begin{equation}\label{eqM}
\text{meas}\,(M\backslash B)= \frac{\delta}{4}\,.
\end{equation}
If the manifold $M$ is analytic, the submanifolds $L_a$, $\Omega_a$ and $B$ can be slightly perturbed to make them analytic~\cite{Hirsch}, keeping Eqs.~\eqref{eqball} and~\eqref{eqM}. Let $N(L_a)$ be a small closed tubular neighborhood of the curve $L_a$ of measure
\begin{equation}\label{eqmetub}
\text{meas}\,(N(L_a))=\frac{\delta}{4}\,,
\end{equation}
and define the set $\hat\Om_1^a:=\Omega_a\backslash N(L_a)\subset B\subset M$. This set is obviously fibred by tori belonging to the same isotopy class $a\in\cI_0(M)$ in $M$ on account of Lemma~\ref{L:Tiso} (so for different $a$, the isotopy classes of the tori are different). Proceeding as in Section~\ref{s2} we parameterize the domain $\hat\Om_1^a$ with coordinates $(x,y,z)\in \T^2\times (-1,1)$. One can choose these coordinates in such a way that the volume form $\mu|_{\hat\Om_1^a}=dx\wedge dy\wedge dz$. Indeed, for general coordinates the volume form has the expression $\mu|_{\hat\Om_1^a}=p(x,y,z)dx\wedge dy\wedge dz$ for some positive function $p$, so defining a new variable $\int_{-1}^z p(x,y,s)ds$, which we still call $z$, one obtains the desired coordinate system.
\\

\noindent {\bf Step 2}: Now we construct a smooth vectorfield $V_a$ on $M$ that is divergence-free with respect to the volume form $\mu$ and that is canonically integrable and nondegenerate in a domain $\Om^1_a\subset\hat\Om^1_a$. This vectorfield can be easily defined using the local coordinates $(x,y,z)$ by the expression:
$$
V_a := \left\{
  \begin{array}{l l}
    f(z)\partial_x+g(z)\partial_y & \quad \text{in}\,\, \hat\Om_1^a\,,\\
    0 & \quad \text{in}\,\, M\backslash\hat\Om_1^a\,, 
  \end{array}\right.
$$
where the functions $f,g$ are smooth, satisfy the twist condition~\eqref{2} with a constant $\tau(c)>0$ in each interval $[-c,c]$ for $c<1$, and are chosen in such a way that they glue smoothly with $0$ as $z\to\pm 1$. By construction, the tori $T^2(c):=\T^2\times\{c\}$ are $V_a$-invariant and nondegenerate for $c\in (-1,1)$. It is obvious from the expression of the volume form $\mu$ in the coordinate $(x,y,z)$ that 
\begin{equation}\label{eqdiv2}
di_{V_a}\mu=0\,. 
\end{equation}
Accordingly, defining a $V_a$-invariant set $\Om_1^a\subset\hat\Om_1^a$, expressed in the coordinates $(x,y,z)$ as $\T^2\times(-c_0,c_0)$ for some $c_0<1$, and such that $\text{meas}\,(\hat\Om_1^a\backslash\Om_1^a)=\delta/4$, then the set $\Om_1^a$ has measure
$$
\text{meas}\,(\Om_1^a)=1-\delta
$$
by Eqs.~\eqref{eqball}--\eqref{eqmetub}. Moreover, $V_a$ is canonically integrable and nondegenerate in $\Om_1^a$, so $\vk_a(V_a)\geq 1-\delta$. If the manifold and the volume form are analytic, it is clear that the vectorfield $V_a$ can be taken analytic in $\overline\Om_1^a$.
\\

\noindent {\bf Step 3}: It remains to prove that $V_a$ is exact, that is the $2$-form $\beta:=i_{V_a}\mu$ is exact. Hodge decomposition explained in Section~\ref{s1} implies that this is equivalent to 
$$
\int_M h\wedge \beta=0
$$
for any closed $1$-form $h$ on $M$. To prove this, we notice that $\beta$ is supported in the solid torus $\Omega_a\subset B$, and $h=dR$ in the $3$-ball $B$ for some function $R\in C^\infty(B)$, because any closed form is exact in a contractible domain. So we have
\begin{align*}
\int_M h\wedge \beta &=\int_{\Omega_a}h\wedge \beta=\int_{\Omega_a} dR\wedge\beta=\int_{\Omega_a}d(R\beta)-\int_{\Omega_a} Rd\beta\\ &=\int_{\partial \Omega_a}R\beta=0\,,
\end{align*}
where we have used Eq.~\eqref{eqdiv2}, Stokes theorem and the fact that $\beta=0$ in $\partial \Omega_a$. This completes the proof of the theorem.  
\end{proof}

\bigskip


\section{A non-ergodicity theorem for the 3D Euler equation}   \label{s4}
Our goal in this section is to apply the previously developed machinery to study the evolution of the Euler equation~\eqref{4.1} on a closed $3$-manifold $M$.
For a non-integer $k>1$ the classical result of Lichtenstein 
 (see e.g. in~\cite{EM70}) shows that the Euler equation defines a local flow $\{\cS_t\}$
of  homeomorphisms of the space $\SVect^k(M)$, where for any $u\in\SVect^k(M)$ 
the solution 
$$u(t,\cdot)=\cS_t(u)\,,\qquad \cS_0(u)=u\,,$$ 
is defined for $t_*(u)<t<t^*(u)$ and is $C^1$-smooth in $t$. It is unknown whether the numbers $t_*<0$ and $t^*>0$ are finite.  

For this $u$ and for 
 $t_*<t<t^*$ we denote by $S_0^t:M\to M$ 
the (non-autonomous) flow of the equation
$$
\dot x=u(t,x),\qquad x\in M\,,
$$
which describes the Lagrangian map of the fluid flow.
According to Kelvin's circulation theorem (see e.g.~\cite{AKh}),
the corresponding vorticity field $\rot u(t)$ is  transported by the fluid flow:
\begin{equation}\label{Kelv}
\rot  u(t) =S_{0 *}^t\big( \rot u(0)\big)\,.
\end{equation}

Since the maps $S_0^t$ are volume-preserving  $C^{k}$-diffeomorphisms of $M$, the item~3 in Proposition~\ref{p} and Remark~\ref{Rsame} imply the following.

\begin{theorem}\label{t3}
 If $u \in\SVect^k(M)$ for $k>2$ and non-integer, then $\vk_a (\rot \cS_t(u))=\,$const for all $a\in \cI(M)$. In other words, the integrability spectrum of $\rot u$, that is 
$$a\in\cI(M) \mapsto \vk_a(\rot(u))$$ 
is an integral of motion of the Euler equation on the space $\SVect^k(M)$. 
\end{theorem}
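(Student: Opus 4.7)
The plan is to combine Kelvin's circulation theorem~\eqref{Kelv} with the diffeomorphism-invariance of $\vk_a$ recorded in Proposition~\ref{p}(3) and Remark~\ref{Rsame}. Setting $\omega(t):=\rot\cS_t(u)$, formula~\eqref{Kelv} rewrites as $\omega(0)=(S_0^t)^*\omega(t)$, so the claim reduces to showing that pullback by the volume-preserving diffeomorphism $\Phi:=S_0^t$ preserves each $\vk_a$, $a\in\cI(M)$, on exact divergence-free vectorfields.

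First I would check the regularity: for $u\in\SVect^k(M)$ with $k>2$ non-integer, Lichtenstein's theory guarantees that $\cS_t(u)\in\SVect^k(M)$ throughout $(t_*(u),t^*(u))$, and that the Lagrangian flow $S_0^t$ is a $\mu$-preserving diffeomorphism of class $C^k$ (in particular $C^2$). By Lemma~\ref{l11}, $\omega(t)\in\SVect_{ex}^{k-1}(M)$, so $\vk_a(\omega(t))$ is well-defined by Definition~\ref{defka}. Proposition~\ref{p}(3) and Remark~\ref{Rsame} then supply the desired equality $\vk_a(\omega(t))=\vk_a((S_0^t)^*\omega(t))=\vk_a(\omega(0))$, provided one checks that $S_0^t$ maps each ergodic invariant torus of $\omega(0)$ in isotopy class $a$ to an ergodic invariant torus of $\omega(t)$ in the \emph{same} class $a$, and conversely.

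The main, and essentially only, point to verify is precisely this isotopy-class bookkeeping. If $T^2$ is a $C^1$ ergodic invariant two-torus of $\omega(0)$ of class $a\in\cI(M)$, then $S_0^t(T^2)$ is a $C^1$ ergodic invariant two-torus of $\omega(t)=(S_0^t)_*\omega(0)$, and the one-parameter family $s\mapsto S_0^s(T^2)$, $s\in[0,t]$, is an ambient smooth isotopy of $M$ connecting $T^2$ to $S_0^t(T^2)$. Hence $S_0^t(T^2)$ belongs to the same class $a$, so $S_0^t$ induces a bijection between the families of ergodic invariant tori in class $a$ for $\omega(0)$ and for $\omega(t)$. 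Since $S_0^t$ is a $\mu$-preserving homeomorphism, it preserves the inner measure of the corresponding unions, which gives $\vk_a(\omega(t))=\vk_a(\omega(0))$ for every $t\in(t_*(u),t^*(u))$ and every $a\in\cI(M)$, which is the theorem.
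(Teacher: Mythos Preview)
Your proof is correct and follows the same route as the paper: Kelvin's circulation theorem~\eqref{Kelv} combined with the diffeomorphism-invariance of $\vk_a$ from Proposition~\ref{p}(3) and Remark~\ref{Rsame}. You are in fact more explicit than the paper's one-line argument in checking that $S_0^t$, being isotopic to the identity via the family $s\mapsto S_0^s$, preserves each isotopy class $a\in\cI(M)$---a point the paper leaves implicit in its appeal to Remark~\ref{Rsame}.
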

The classical conserved quantities of the Euler equation (e.g.~\cite[Section I.9]{AKh}) are the \emph{energy},
 $$\cE(u):=\int_M (u,u)\,\mu\,,$$
 and the \emph{helicity}
 $$\cH(u):=\int_M (u,\rot u)\,\mu\,.$$  
 (Note that in terms of the vorticity field $\omega=\rot u $ the helicity assumes the  form
$\cH(u)=\hat\cH(\omega):=\int_M (\rot^{-1}\omega,\omega)\,\mu$).
Theorem~\ref{t3} introduces other conserved quantities $\vk_a, a\in \cI(M)$,  of the Euler flow. 
Below we are going to make use of the continuity property of these functionals. 

\smallskip

The following lemma is a version of Theorem~\ref{Tconst} where we fix the energy and  helicity, and construct a vectorfield with  prescribed values of those quantities, as well as a prescribed value of a partial integrability functional $\vk_a$.

\begin{lemma}\label{Leh}
Let $M$ be a closed $3$-manifold endowed with a volume form $\mu$, and fix arbitrary $h\in \R$ 
and a sufficiently large $e>0$. Then, for any $a\in \cI_0(M)\cong\Z$ and $0<\delta<1$ there exists a vectorfield $u_a\in \SVect^{\infty}(M)$ such that:
\begin{enumerate}
\item $\rot u_a$ has an invariant domain $\Om_1^a$ with meas$\,(\Om_1^a)=1-\delta$.
\item $\rot u_a|_{\Om_1^a}$ is canonically integrable and nondegenerate, so that 
$\vk_a(\rot u_a)\geq 1-\delta$.
\item $\rot u_a|_{\overline\Om_1^a}$ can be taken analytic provided that $M$ and $\mu$ are analytic.
\item $\cH(u_a)=h$ and $\cE(u_a)=e$.
\end{enumerate}
\end{lemma}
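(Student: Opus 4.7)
The plan is to combine Theorem~\ref{Tconst}, which produces a vorticity field with the required integrability spectrum, with Lemma~\ref{l11}, which inverts $\rot$, and then to adjust the resulting velocity field by adding corrections supported in small balls disjoint from $\hat\Om_1^a$ so as to realize the prescribed $\cE$ and $\cH$.

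First, apply Theorem~\ref{Tconst} to produce $V_a\in\SVect^\infty_{ex}(M)$ satisfying items 1--3 of the present lemma with $V_a$ playing the role of $\rot u_a$, and use Lemma~\ref{l11} to pick $u^{(0)}\in\SVect^\infty(M)$ with $\rot u^{(0)}=V_a$; denote $\cE_0:=\cE(u^{(0)})$ and $\cH_0:=\cH(u^{(0)})$. Now choose two smooth balls $B_1,B_2\subset M\setminus\hat\Om_1^a$ with $\overline B_1\cap\overline B_2=\emptyset$, and construct smooth divergence-free vectorfields $v_H$ supported in $\overline B_1$ and $v_E$ supported in $\overline B_2$, where $v_H$ is a smoothly truncated ``Hopf-type'' model with $\cH(v_H)\ne 0$ (its helicity sign can be reversed by passing to the mirror image), and $v_E$ is a ``shear''-type model with $\cH(v_E)=0$ and $\cE(v_E)>0$. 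Set
\begin{equation*}
u_a \;=\; \lambda u^{(0)} + \mu v_H + \nu v_E,\qquad \lambda>0,\ \mu,\nu\in\R.
\end{equation*}

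Since $\supp v_H\cup\supp v_E\subset M\setminus\hat\Om_1^a$, on $\Om_1^a$ we have $\rot u_a=\lambda V_a$; under $V_a\mapsto\lambda V_a$ the twist quantity in~\eqref{2} simply rescales by $\lambda^2$, so for every admissible $(\lambda,\mu,\nu)$ the vorticity $\rot u_a$ is canonically integrable and nondegenerate on $\Om_1^a$ with the same invariant tori of isotopy class $a$, and the analyticity claim on $\overline\Om_1^a$ is inherited from Theorem~\ref{Tconst} because the corrections are supported away from $\overline\Om_1^a$. The pairwise-disjoint supports of $V_a$, $v_H$, $v_E$ make all cross-helicity terms vanish after integration by parts on the closed manifold $M$ (for example $\int(u^{(0)},\rot v_H)\mu=\int(V_a,v_H)\mu=0$, and $\int(v_H,\rot v_E)\mu=0$ because $v_H$ and $\rot v_E$ have disjoint supports), giving the clean expressions
\begin{equation*}
\cH(u_a)=\lambda^2\cH_0+\mu^2\cH(v_H),\qquad \cE(u_a)=\lambda^2\cE_0+\mu^2\cE(v_H)+\nu^2\cE(v_E)+2\lambda\mu C_H+2\lambda\nu C_E,
\end{equation*}
with $C_H=\int(u^{(0)},v_H)\mu$ and $C_E=\int(u^{(0)},v_E)\mu$.

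It remains to solve these two equations for $(\lambda,\mu,\nu)$, which is the only step requiring care. Fix $\lambda>0$ small, and, replacing $v_H$ by its mirror image if necessary, arrange that $(h-\lambda^2\cH_0)\cH(v_H)\ge 0$; then the helicity equation is solved by $\mu=\pm\sqrt{(h-\lambda^2\cH_0)/\cH(v_H)}\in\R$. With $\lambda,\mu$ fixed, the energy equation becomes $\cE(v_E)\nu^2+2\lambda C_E\,\nu+(K-e)=0$, where $K=\lambda^2\cE_0+\mu^2\cE(v_H)+2\lambda\mu C_H$ is a bounded constant independent of $e$; the discriminant $4\lambda^2 C_E^2+4\cE(v_E)(e-K)$ is positive for all sufficiently large $e$ (which is precisely the lower bound on $e$ demanded by the lemma), so $\nu\in\R$ can be chosen matching $\cE(u_a)=e$. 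The resulting $u_a\in\SVect^\infty(M)$ then satisfies all four conclusions of the lemma, the only subtlety being the sign juggling that guarantees $\mu^2\ge 0$, handled by the mirror-flip freedom in $v_H$.
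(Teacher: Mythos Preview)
Your proof is correct and follows essentially the same route as the paper: start from Theorem~\ref{Tconst} and Lemma~\ref{l11}, then add two divergence-free corrections with pairwise disjoint supports lying outside the integrable region---one to fix the helicity and a second, helicity-free one to fix the energy---using the disjointness of the supports of $V_a$, $v_H$, $v_E$ (together with integration by parts) to kill all cross-helicity terms. The paper's argument is marginally simpler in two respects: it does not rescale the base field $u^{(0)}$ (your parameter $\lambda$ is unnecessary and could be set to $1$), and instead of fixing $v_H$ up to a mirror flip and solving $\mu^2=(h-\lambda^2\cH_0)/\cH(v_H)$, it simply notes that a compactly supported divergence-free field can be chosen with \emph{any} prescribed helicity, which avoids the sign bookkeeping altogether.
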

\begin{proof}
For each $a\in\cI_0(M)$, let $V_a$ be the exact smooth 
divergence-free vectorfield constructed in Theorem~\ref{Tconst}, which is supported on the set $\tilde\Om_1^a$ and is analytic in a set $\overline\Om_1^a$ provided that the manifold $M$ and the volume form $\mu$ are analytic. Applying Lemma~\ref{l11} we obtain that there exists a vectorfield $u'_a\in \SVect^{\infty}(M)$ such that $\rot u'_a=V_a$. The properties of the vectorfield $V_a$ imply that the above conditions $1$, $2$ and $3$  hold, so it remains to prove that one can modify $u'_a$ to fulfill condition $4$ without altering the other conditions.

Indeed, define the vectorfield $u_a:=u'_a+v+\lambda v'$, $\lambda\in\R$, where $v$ and $v'$ are divergence-free vectorfields supported on sets $K$ and $K'$ respectively such that $\overline K\cap \overline{\Omega_a}=\overline K'\cap \overline{\Omega_a}=\overline K\cap \overline K'=\emptyset$. We also assume that the helicity $\cH(v')=0$. Then
$$\cH(u_a)=\cH(u'_a)+\cH(v)\,,$$
where we have used that 
$$
\int_M v\rot u'_a=\int_Mv'\rot u'_a=\int_Mv\rot v'=\int_Mv'\rot v=0\,,
$$ 
which, in turn, holds since the supports of $v,v'$ and $\rot u'_a$ are pairwise disjoint, and that 
$$\int_M u'_a\rot v=\int_M v\rot u'_a=0\,,
$$ 
integrating by parts, and the same for $v'$. Since the helicity of $v$ can take any real value, it follows that one can choose it so that $\cH(u_a)=h$. Regarding the energy we have
$$\cE(u_a)=\cE(u'_a+v)+2\lambda\int_Mv'(u'_a+v)+\lambda^2\cE(v')\,,$$
and hence choosing appropriate $\lambda$ and $v'$ we get that $\cE(u_a)=e$ for an arbitrary real
$e\ge \cE(u'_a+v)$. By construction, $\rot u_a=V_a$ in the domain $\Omega_a$, which completes the proof of the lemma.  
\end{proof}

\begin{remark}
For a given vector field $\omega=\rot u$ its helicity and energy satisfy the Schwartz inequality
$$
|\hat\cH(\omega)|=\Big|\int_M (u,\omega)\,\mu \Big|\le C\int_M(\omega,\omega)\,\mu=C\cE(\omega)\,,
$$ 
where the constant $C$ depends on the Riemannian geometry of the manifold $M$, as described by Arnold~\cite{Arn73}, see also~\cite[Chapter III]{AKh}.
This constant is the maximal absolute value of the eigenvalues of the (bounded) operator $\rot^{-1}$ on exact divergence-free vectorfields. In terms of the velocity field $u$, when comparing 
the helicity $\cH(u)$ and the energy $\cE(u)$ we are dealing with the unbounded operator $\rot$, and so the above inequality is no longer relevant. 
\end{remark}

Now we are in a position to prove the non-ergodicity of the Euler flow. Everywhere below we assume that $ k>4$ is a non-integer number.

\begin{theorem}\label{thm:main}
Let $M$ be a closed $3$-manifold with a volume form $\mu$. 
Fix two  constants $h\in \R$ and  $e\gg 1$.  
Then there is a non-empty open set $\Gamma_a\subset \SVect^{k}(M)$ for each $a\in\Z$ such that $\cH(\Gamma_a)=h$, $\cE(\Gamma_a)=e$, and
$$
\Gamma_a\cap\cS_t(\Gamma_{b})=\emptyset
$$ 
for $a\neq b$ and for all $t$ for which the local flow is defined.
\end{theorem}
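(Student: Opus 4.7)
The plan is to realize each $\Gamma_a$ as a small $C^k$-ball around a carefully chosen reference field $u_a$, intersected with the prescribed energy-helicity level set. Using Lemma~\ref{Leh}, for each $a \in \cI_0(M)\cong\Z$ I would fix a vectorfield $u_a \in \SVect^\infty(M)$ with $\cH(u_a)=h$, $\cE(u_a)=e$, and a $\rot u_a$-invariant domain $\Omega_1^a$ of measure $1-\delta$ (take $\delta=1/4$) on which $\rot u_a$ is canonically integrable and nondegenerate, with all invariant tori lying in the isotopy class $a$. Then I would set
\[
\Gamma_a := \{u \in \SVect^k(M) : \|u - u_a\|_{C^k} < r_a,\ \cH(u) = h,\ \cE(u) = e\},
\]
which contains $u_a$ and is open in the level set $\{\cH=h,\ \cE=e\}$ for any $r_a>0$.

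To fix $r_a$, I would apply the $\vk_a$-version of Corollary~\ref{csigma} (as indicated in Remark~\ref{Rsame}) to $V=\rot u_a$; the smooth (non-analytic) case gives
\[
\vk_a(W) \geq \text{meas}(\Omega_1^a) - E(\|\rot u_a - W\|_{C^{k-1}}) = \tfrac{3}{4} - E(\|\rot u_a - W\|_{C^{k-1}}),
\]
with $E(t)\to 0$ as $t\to 0$. Since the vorticity operator $\rot\colon \SVect^k \to \SVect^{k-1}_{ex}$ is continuous by Lemma~\ref{l11}, and since $k-1>3$ (the KAM smoothness threshold underlying Theorem~\ref{t1} and hence Corollary~\ref{csigma}), choosing $r_a$ small enough guarantees $\vk_a(\rot u) > 1/2$ for every $u \in \Gamma_a$.

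The disjointness then follows from two ingredients. First, the superadditivity \eqref{bydef} combined with $\vk \le 1$ forces $\vk_a(\rot u) \le 1 - \vk_b(\rot u) < 1/2$ for every $u \in \Gamma_b$ with $b \ne a$. Second, Theorem~\ref{t3} asserts that $\vk_a \circ \rot$ is an integral of motion of the Euler flow, so $\vk_a(\rot\cS_t(u)) = \vk_a(\rot u) < 1/2$ for every $u\in\Gamma_b$ and every admissible $t$. But $\cS_t(u) \in \Gamma_a$ would force $\vk_a(\rot \cS_t(u)) > 1/2$, a contradiction.

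The main technical hurdle I anticipate is justifying the required continuity of $\vk_a$ at the reference field: the full continuity of Proposition~\ref{prop1} demands integrability on all of $M$, which $\rot u_a$ does not possess outside of $\Omega_1^a$. The one-sided lower bound from Corollary~\ref{csigma}, which only needs canonical integrability and nondegeneracy on a single invariant domain, is exactly the tool that circumvents this limitation; its reliance on the KAM Theorem~\ref{t1} (and hence on Moser's twist theorem in the class $C^{k-1}$ with $k-1>3$) is what dictates the regularity restriction $k>4$ in the final statement. The distinction between isotopy classes needed to separate different $\Gamma_a$ is clean thanks to Lemma~\ref{L:Tiso}, which ensures that the $V_a$ produced by Theorem~\ref{Tconst} carry tori exclusively in the class $a$.
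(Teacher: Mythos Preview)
Your proposal is correct and follows essentially the same route as the paper: build $u_a$ via Lemma~\ref{Leh}, take $\Gamma_a$ to be a small $C^k$-neighbourhood of $u_a$ inside the energy--helicity level set, invoke the $\vk_a$-version of Corollary~\ref{csigma} (through Remark~\ref{Rsame}) to force $\vk_a(\rot u)>1/2$ on $\Gamma_a$, and then combine the superadditivity bound~\eqref{bydef} with the conservation law of Theorem~\ref{t3} to separate the $\Gamma_a$'s. Your explicit bookkeeping of the regularity shift ($\rot$ loses one derivative, so Corollary~\ref{csigma} is applied at level $k-1>3$) and your remark that only the one-sided lower bound on a single domain is needed---rather than full integrability on $M$---are exactly the points the paper's proof uses implicitly.
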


\begin{proof}
Take the vectorfields $u_a\in\SVect^{k}(M)$ constructed in Lemma~\ref{Leh}, all of them of fixed energy $e$ and helicity $h$. Since $\text{meas}\,(\Om^1_a)=1-\delta$ and $\rot u_a|_{\Om^1_a}$ is nondegenerate, Corollary~\ref{csigma} and Remark~\ref{Rsame} imply that any vectorfield $u'_a\in \Gamma_a$ which is $\epsilon$-close to $u_a$ in the $C^{k}$ topology satisfies
$$\vk_a(\rot u'_a)\geq 1-\delta-E(\epsilon)\,,$$
where $E$ is an error function satisfying $\lim_{\epsilon\to 0}E(\epsilon)=0$, and hence $\vk_b(\rot u'_a)\leq \delta+E(\epsilon)$ for any $b\neq a$, in view of \eqref{bydef}. 
Since the quantity $\vk_a(\rot u'_a)$ is conserved by the Euler flow, cf. Theorem~\ref{t3}, the theorem follows by taking $\delta$ and $\epsilon$ sufficiently small.  
\end{proof}

This theorem implies that the dynamical system defined by the Euler flow~\eqref{4.1} in the space $\Lambda_{e,h}\subset\SVect^{k}(M)$ of fixed energy $e\gg 1$ and helicity $h$, is neither ergodic nor mixing. The reason is that there are open sets of $\Lambda_{e,h}$ which do not intersect under the evolution of the Euler equation. We recall that according to a result of N.~Nadirashvili~\cite{Na91} (see also~\cite[Section II.4.B]{AKh}) the dynamical system defined by the 2D Euler equation on an annulus has  wandering trajectories in the $C^1$-topology, which is the strongest form of non-ergodicity. 
Namely, {\it for the Euler equation in a 2D annulus there is a divergence-free vectorfield $u_0$, such that for all fields in a sufficiently small $C^1$-neighborhood $\mathcal U$ of 
$u_0$ and sufficiently large time $T$ the Euler flow $\mathcal S_t$
after this time is ``never returning": $\mathcal U\cap S_t(\mathcal U)=\emptyset$ for all $t>T$}, \cite{Na91}. A similar result was proved by A.~Shnirelman~\cite{Sh97} showing the existence of wandering solutions for the Euler equation on the $2$-torus in appropriate Besov spaces.
Our result can be regarded as a 3D version of the 2D Nadirashvili's and Shnirelman's theorems.

Our next theorem provides a criterion to guarantee that neighborhoods (in the $C^{k}$-topology) of two divergence-free vectorfields do not intersect under the Euler flow. The result is stated in terms of the integrability spectrum introduced in Definition~\ref{defka}.

\begin{theorem}\label{Thlast}
Let $u,v \in \SVect^{k}(M) $ be vectorfields such that $\rot u$ and $\rot v$ are integrable and nondegenerate. Assume that 
the integrability spectra of $\rot u$ and $\rot v$ are different. Then the orbit of a sufficiently small  $C^{k}$-neighbourhood of $u$ under the Euler flow stays at a positive $C^{k}$-distance
from $v$.
\end{theorem}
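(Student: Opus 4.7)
The plan is to combine two pieces of machinery developed in the paper: first, that every $\vk_a$ is an integral of motion of the Euler flow (Theorem \ref{t3}); and second, that $\vk_a$ is continuous at integrable nondegenerate vectorfields (Proposition \ref{prop1} together with Remark \ref{Rsame}). Since by hypothesis $\rot u$ and $\rot v$ have different integrability spectra, I would pick an isotopy class $a_0 \in \cI(M)$ on which the two spectra disagree and set
$$
\eta := |\vk_{a_0}(\rot u) - \vk_{a_0}(\rot v)| > 0.
$$

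Next I would upgrade continuity of $\vk_{a_0}$ from vorticities to velocity fields. Since $u,v \in \SVect^k(M)$ with $k > 4$, their vorticities lie in $\SVect^{k-1}_{ex}(M)$ with $k-1 > 3$; as $\rot u$ and $\rot v$ are integrable nondegenerate, Proposition \ref{prop1} (extended to $\vk_a$ by Remark \ref{Rsame}) ensures that $\vk_{a_0}$ is continuous at each of $\rot u, \rot v$ in the $C^{k-1}$-topology. Composing with the continuous vorticity operator $\rot:\SVect^{k}(M)\to\SVect^{k-1}_{ex}(M)$ of Lemma \ref{l11}, the functional $w \mapsto \vk_{a_0}(\rot w)$ is continuous at both $u$ and $v$ in the $C^k$-topology. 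Consequently there exist $\e_u,\e_v>0$ such that
$$
|\vk_{a_0}(\rot w) - \vk_{a_0}(\rot u)| < \eta/3 \quad\text{whenever } \|w-u\|_{C^k}<\e_u,
$$
and analogously for $v$ with $\e_v$.

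Finally, I would set $\mathcal U := \{ u' \in \SVect^k(M) : \|u'-u\|_{C^k}<\e_u \}$ and argue by contradiction. For any $u' \in \mathcal U$ and any $t$ in the lifespan of the solution, Theorem \ref{t3} gives $\vk_{a_0}(\rot \cS_t(u')) = \vk_{a_0}(\rot u')$, which lies within $\eta/3$ of $\vk_{a_0}(\rot u)$. If one had $\|\cS_t(u')-v\|_{C^k}<\e_v$, then by the continuity at $v$, the value $\vk_{a_0}(\rot \cS_t(u'))$ would also lie within $\eta/3$ of $\vk_{a_0}(\rot v)$; combining both estimates via the triangle inequality would then force $\eta<2\eta/3$, a contradiction. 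Hence $\|\cS_t(u')-v\|_{C^k}\ge \e_v$ for all $u'\in\mathcal U$ and all admissible $t$, which is precisely the claim.

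The approach presents essentially no serious obstacle, since all the heavy lifting (the KAM-based continuity of $\vk_a$ and the conservation law for the integrability spectrum) has been carried out in the earlier sections. The one point that requires mild care is the interplay of differentiability classes: applying Proposition \ref{prop1} to the vorticities costs one derivative, and this loss is exactly absorbed by the standing assumption $k>4$.
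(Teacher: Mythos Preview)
Your proposal is correct and follows essentially the same approach as the paper: pick an isotopy class $a_0$ where the spectra differ, use the continuity of $\vk_{a_0}$ at integrable nondegenerate fields (Proposition~\ref{prop1}) to get neighbourhoods on which the strict inequality persists, and then invoke the conservation law Theorem~\ref{t3}. Your version is more explicit about the derivative bookkeeping (composing with $\rot$ via Lemma~\ref{l11} to pass from $C^k$ on velocities to $C^{k-1}$ on vorticities, absorbing the loss via $k>4$) and writes out the triangle-inequality contradiction in full, but the underlying argument is the same.
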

 \begin{proof}
 By assumption, there is some $a  \in\cI(M)$ for which $\vk_a(\rot v)>\vk_a(\rot u)$. Then Proposition~\ref{prop1} implies that this inequality remains true for all vectorfields $v'\in \mathcal U(v)$ and $u'\in \mathcal U(u)$, where $\mathcal U(v)$ and $\mathcal U(u)$ are sufficiently small  $C^{k}$-neighbourhoods of $v$ and $u$. Since $\vk_a(\rot u')$ is a conserved quantity of the Euler flow according to Theorem~\ref{t3}, the claim follows.
\end{proof} 
 
If $u\in\SVect^{k}(M)$ is an analytic divergence-free vectorfield, one can get an explicit estimate for the $C^{k}$-distance  between $u$ and a trajectory $\cS_t(v)$ of the Euler flow, as stated in the following theorem.
 
\begin{theorem}\label{p2}
Let 
$u,v \in \SVect^{k}(M) $ be vectorfields such that $\rot u$ and $\rot v$ are canonically integrable and 
nondegenerate in domains $\Om^a_1$ and $\Om^b_1$, $a\neq b$, respectively. 
We also assume that $u$ is analytic in $\overline\Om^a_1$ and that $\text{meas}\,(\Om_1^a)=:\sigma_a(u)>1/2$ and $\text{meas}\,(\Om^b_1)=:\sigma_b(v)>1/2$.
Then there are constants $\eta>0$ and $C_u$ (the latter depending only on $u$) such that for all $t$ one has the estimate
\begin{equation}\label{esti}
\dist_{C^{k}}(u, \cS_t(v))\ge C_u(\sigma_a(u)+\sigma_b(v)-1)^{\eta}>0\,. 
\end{equation}
\end{theorem}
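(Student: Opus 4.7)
The plan is to derive the lower bound by bracketing the quantity $\vk_a(\rot \cS_t v)$ from above (using the conservation law of the integrability spectrum) and from below (using the H\"older continuity of $\vk_a$ at the analytic, integrable, nondegenerate vectorfield $\rot u$), and then extracting a quantitative lower bound on $\|u-\cS_t v\|_{C^k}$. Throughout, $t$ is taken in the interval of existence of the Euler trajectory $\cS_t v$.

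For the upper bound, I first claim that $\vk_b(\rot v) \ge \sigma_b(v)$. Indeed, on $\Om_1^b$ the field $\rot v$ is canonically integrable with slope profile $f/g$ whose derivative $(f/g)' = (f'g-fg')/g^2$ is of one sign on each component where $g\ne 0$ by the twist condition~\eqref{2}; hence $f/g$ is strictly monotone on each such component, takes rational values on a measure-zero set, and the complementary set of ergodic invariant tori all lie in the isotopy class $b$. By Theorem~\ref{t3} the spectrum is conserved, so $\vk_b(\rot \cS_t v)=\vk_b(\rot v)\ge \sigma_b(v)$. Since ergodic invariant tori in different isotopy classes are pairwise disjoint as subsets of $M$, the superadditivity of inner measure (as recorded in~\eqref{bydef}) gives
\[
\vk_a(\rot \cS_t v) + \vk_b(\rot \cS_t v) \le \vk(\rot \cS_t v) \le 1,
\]
whence $\vk_a(\rot \cS_t v)\le 1-\sigma_b(v)$.

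For the lower bound, since $u$, and therefore $\rot u$, is analytic on $\overline{\Om_1^a}$, and $\rot u$ is canonically integrable and nondegenerate on $\Om_1^a$, Corollary~\ref{csigma} applied via Remark~\ref{Rsame} yields constants $C_u>0$ and $\theta>0$ with
\[
\vk_a(W) \ge \sigma_a(u) - C_u\,\|\rot u - W\|_{C^{k-1}}^\theta
\]
for every $W\in\SVect^{k-1}_{ex}(M)$; here $k-1>3$, as required by the KAM theorem of Section~\ref{s2}. By Lemma~\ref{l11} the operator $\rot:\SVect^k(M)\to\SVect^{k-1}_{ex}(M)$ is continuous, so $\|\rot u - \rot \cS_t v\|_{C^{k-1}} \le C\,\|u-\cS_t v\|_{C^k}$. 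Substituting $W=\rot \cS_t v$ and comparing with the upper bound,
\[
\sigma_a(u) - C_u'\,\|u-\cS_t v\|_{C^k}^\theta \le 1 - \sigma_b(v),
\]
from which~\eqref{esti} follows with $\eta=1/\theta$, the right-hand side being strictly positive because $\sigma_a(u)+\sigma_b(v)>1$ by the assumption $\sigma_a(u),\sigma_b(v)>1/2$.

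The only delicate point is bookkeeping the loss of one derivative when passing from the vorticity norm $\|\cdot\|_{C^{k-1}}$ to the velocity norm $\|\cdot\|_{C^k}$ and simultaneously preserving the KAM threshold $k-1>3$; both are ensured by the standing hypothesis that $k>4$ is non-integer, the latter also guaranteeing continuity of the Hodge projections involved. The remainder of the argument is a straightforward concatenation of the conservation law (Theorem~\ref{t3}), the H\"older continuity of $\vk_a$ at analytic integrable nondegenerate points, and the continuity of the curl operator.
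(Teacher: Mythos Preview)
Your proof is correct and follows the same route as the paper: bound $\vk_a(\rot \cS_t v)$ from above by $1-\sigma_b(v)$ using conservation (Theorem~\ref{t3}) together with~\eqref{bydef}, bound it from below by $\sigma_a(u)-C_u\|\rot u-\rot\cS_t v\|^\theta$ using Corollary~\ref{csigma} via Remark~\ref{Rsame}, and solve for the distance with $\eta=1/\theta$. You are simply more explicit than the paper about why $\vk_b(\rot v)\ge\sigma_b(v)$ and about the loss of one derivative when passing from vorticity to velocity (ensuring $k-1>3$), points the paper leaves implicit.
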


\begin{proof}
The assumptions imply that $\vk_a(\rot u)\geq \sigma_a(u)>1/2$ and $\vk_b(\rot v)\geq \sigma_b(v)>1/2$,
so $\vk_a(\rot v)\leq 1-\vk_b(\rot v)<1/2$. 
The theorem then follows from Corollary~\ref{csigma} and Remark~\ref{Rsame}, where $\eta=1/\theta$.
\end{proof}

In Examples~\ref{extorus} and~\ref{exsphere} (see also Example~\ref{ex3}) we have constructed analytic steady solutions of the Euler equation in $\T^3$ and $\mathbb S^3$ whose vorticities are integrable and nondegenerate vectorfields. Applying Theorem~\ref{p2} one can
estimate from below  the $C^{k}$-distance  of these steady solutions $u$ to the trajectories of the Euler flow for many initial conditions. For instance, take a vectorfield $v$ as in Lemma~\ref{Leh}, analytic in $\overline\Om_1^a$ and whose invariant tori are non-trivially knotted, and let $w$ be a field from 
 an $\epsilon$-neighborhood $\mathcal U(v)$ of $v$ in the $C^{k}$-topology. 
In this case, $\vk_{a_0}(\rot u)=1$ and $\vk_a(\rot w)\geq 1-\delta-C_v\epsilon^{\theta}$ by Corollary~\ref{csigma} and Remark~\ref{Rsame} (here $a_0\neq a$ denotes the isotopy class of the invariant tori of the steady state), so we conclude that
$$
\dist_{C^{k}}(u, \cS_t(w))\ge C_u(1-\delta-C_v\epsilon^{\theta})^{\eta}\,.
$$
In fact, in Example~\ref{extorus} we have constructed steady solutions $u^x$, $u^y$ and $u^z$ on $\T^3$ whose invariant tori (as well as invariant tori of their vorticities) are given by $\{x=c\}$, $\{y=c\}$ and $\{z=c\}$, respectively, so they are not isotopic (and homologically distinct). Therefore, the steady states $u^x$, $u^y$ and $u^z$ have $C^{k}$-neighbourhoods ${\mathcal U}_x$, ${\mathcal U}_y$ and ${\mathcal U}_z$
such that a trajectory of the 3D Euler equation cannot pass through  two different neighbourhoods, i.e. these neighbourhoods do not mix under the Euler flow. In particular, no two of these steady states  can be joined by a heteroclinic connection.

\begin{remark}\label{remverylast}
One should mention that there are other dynamical properties of $C^1$ vectorfields that are invariant under diffeomorphisms and that can be used to analyze the behavior of the 
3D Euler equation, similarly to the invariants $\vk_a$  introduced in this paper. 
A natural extension of the functionals $\vk_a$ consists in considering similar invariants 
for sets of embedded tori, forming non-trivial links instead of knots discussed above. 
In this case, for a given number $N$ of link components, the index $a$ runs over 
isotopy classes  of configurations of $N$ embedded non-intersecting tori in $M$. 
This gives more elaborate examples of mutually avoiding open sets.

Another extension is given by considering the rotation numbers of the invariant tori. 
Such invariants allow one to ``localize'' the conserved quantities $\vk_a$ by taking those 
invariant tori whose rotation number belongs to a certain interval.

In a different spirit, one can also consider the functional
$n(V)$, defined as the number of singular points of the field $V$
if all the singularities are hyperbolic and as infinity otherwise, and it 
is obviously invariant under diffeomorphisms. The hyperbolic permanence theorem implies that $n(V)$ is locally constant at $V$ if $n(V)<\infty$, so proceeding as we explain in this section, one could prove the non-ergodicity of 3D Euler using $n$ instead of $\vk_a$. The main advantages of the invariants $\vk_a$ compared with $n$ (and other invariants of vectorfields) are:
\begin{itemize}
\item It is easy to show that the conserved quantities $\vk_a$ are independent of the energy and the helicity (see Lemma~\ref{Leh}). Additionally, the construction of exact divergence-free vectorfields with prescribed $\vk_a$ is not very hard (see Theorem~\ref{Tconst}), while it is not clear how to construct exact divergence-free vectorfields  whose all singularities are hyperbolic.
\item The invariants $\vk_a$ are lower semicontinuous and are particularly  well behaved for vectorfields that are close to nondegenerate integrable stationary solutions of the Euler equation, which are ``typical'' according to Arnold's structure theorem. The latter allows one to analyze the role of these steady states for the long time dynamics of 3D Euler in $C^{k}$, $k>4$.
\end{itemize}
\end{remark}

%
%

\end{document}